\newtheorem{lem}{Lemma}
\newtheorem{cor}{Corollary}
\newtheorem{theor}{Theorem}
\DeclareMathOperator{\im}{im}
 \DeclareMathOperator{\rank}{rk}
\def\restriction#1#2{\mathchoice
              {\setbox1\hbox{${\displaystyle #1}_{\scriptstyle #2}$}
              \restrictionaux{#1}{#2}}
              {\setbox1\hbox{${\textstyle #1}_{\scriptstyle #2}$}
              \restrictionaux{#1}{#2}}
              {\setbox1\hbox{${\scriptstyle #1}_{\scriptscriptstyle #2}$}
              \restrictionaux{#1}{#2}}
              {\setbox1\hbox{${\scriptscriptstyle #1}_{\scriptscriptstyle #2}$}
              \restrictionaux{#1}{#2}}}
\def\restrictionaux#1#2{{#1\,\smash{\vrule height .8\ht1 depth .85\dp1}}_{\,#2}} 
\newcommand{\T}{m}
\newcommand{\K}{n}
\newcommand{\n}[1]{%
  \ifstrequal{#1}{1}{_{4}}{}
	\ifstrequal{#1}{2}{_{3}}{}
	\ifstrequal{#1}{3}{_{2}}{}
  \ifstrequal{#1}{4}{_{1}}{}
	\ifstrequal{#1}{i}{_i}{}
}
\newcommand{\p}[1]{P\n{#1}}
\newcommand{\R}[1]{p\n{#1}}
\newcommand{\lgen}[0]{\langle}
\newcommand{\rgen}[0]{\rangle}
\DeclareMathOperator{\olominus}{\overline{\ominus}}
\DeclareMathOperator{\olcap}{\overline{\cap}}
\DeclareMathOperator{\olker}{\overline{\ker}}
\DeclareMathOperator{\olim}{\overline{\im}}
\newcommand{\ttt}{$2\!\times\!2$}
\newcommand{\mypar}[1]{{\bf #1.}}
\title{Generalizing Block LU Factorization:\\A Lower-Upper-Lower Block Triangular Decomposition with Minimal Off-Diagonal Ranks}
\author{Fran\c cois Serre}
\author{Markus P\"uschel}
\affil{Department of Computer Science, ETH Zurich}
\date{}
\begin{document}
\maketitle
\begin{abstract}
We propose a novel factorization of a non-singular matrix $P$, viewed as a $2\times 2$-blocked matrix. The factorization decomposes $P$ into a product of three matrices that are lower block-unitriangular, upper block-triangular, and lower block-unitriangular, respectively. Our goal is to make this factorization ``as block-diagonal as possible'' by minimizing the ranks of the off-diagonal blocks. We give lower bounds on these ranks and show that they are sharp by providing an algorithm that computes an optimal solution. The proposed decomposition can be viewed as a generalization of the well-known Block LU factorization using the Schur complement.
Finally, we briefly explain one application of this factorization: the design of optimal circuits for a certain class of streaming permutations.
\end{abstract}

\section{Introduction}
Given is a non-singular matrix $\p0\in GL_{\T+\K}(\mathbb{K})$ over a field $\mathbb{K}$. We partition $\p0$ as
$$
\p0=\begin{pmatrix}
\p4 & \p3\\
\p2 & \p1\\
\end{pmatrix},
\quad\text{such that $\p4$ is $\T\times \T$}.
$$
We denote the ranks of the submatrices with $\R{i}=\rank \p{i}$, $i=1,2,3,4$. Matrices are denoted with capital letters and vector spaces with calligraphic letters.

If $\p4$ is non-singular, then a block Gaussian elimination uniquely decomposes $\p0$ into the form:
\begin{equation}\label{deco1}\p0=\begin{pmatrix}
I_\T & \\
L & I_\K\\
\end{pmatrix}
 \begin{pmatrix}
C\n4 & C\n3\\
 & C\n1\\
\end{pmatrix},\end{equation}
where $I_\T$ denotes the $\T\times \T$ identity matrix. The rank of $L = \p2\p4^{-1}$ is equal to $\R2$, and $C\n1$ is the Schur complement of $\p4$. Conversely, if such a decomposition exists for $\p0$, then $\p4$ is non-singular. This block LU decomposition has several applications including computing the inverse of $\p0$~\cite{Chow:97}, solving linear systems~\cite{Demmel:92}, and in the theory of displacement structure \cite{Pan:01}.  The Schur complement is also used in statistics, probability and numerical analysis~\cite{Zhang:06,Cottle:74}.

Analogously, the following decomposition exists if and only if $\p1$ is non-singular:
\begin{equation}\label{deco2}\p0= \begin{pmatrix}
C\n4 & C\n3\\
 & C\n1\\
\end{pmatrix}
\begin{pmatrix}
I_\T & \\
R & I_\K\\
\end{pmatrix}.
\end{equation}
This decomposition is again unique, and the rank of $R$ is $\R2$.

In this article, we release the restrictions on $\p1$ and $\p4$ and propose the following decomposition for a general $\p0\in GL_{\T+\K}(\mathbb{K})$:
\begin{equation}\label{deco3}\p0=\begin{pmatrix}
I_\T & \\
L & I_\K\\
\end{pmatrix} \begin{pmatrix}
C\n4 & C\n3\\
 & C\n1\\
\end{pmatrix}
\begin{pmatrix}
I_\T & \\
R & I_\K\\
\end{pmatrix},
\end{equation}
where in addition we want the three factors to be ``as block-diagonal as possible,'' i.e., that $\rank L + \rank C\n3 + \rank R$ is minimal.

\subsection{Lower bounds}
The following theorem provides bounds on the ranks of such a decomposition:
\begin{theor}
\label{bounds}
If a decomposition \eqref{deco3} exists for $\p0\in GL_{\T+\K}(\mathbb{K})$, it satisfies
\begin{align}
\label{bound0}&\rank C\n3=\R3,\\
\label{bound1}&\rank L\geq \K-\R1,\\
\label{bound2}&\rank R\geq \T-\R4,\\
\label{bound3}&\rank R+\rank L \geq \R2.
\end{align}
In particular, the rank of $C\n3$ is fixed and we have:
\begin{equation}
\label{boundS}
\rank R+\rank L \geq \max(\R2,\K+\T-\R1-\R4).
\end{equation}
\end{theor}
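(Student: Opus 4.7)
My plan is to expand the product in \eqref{deco3} to express the blocks of $\p0$ in terms of $L$, $R$, and the four blocks of the middle factor, and then exploit the fact that the middle factor is itself non-singular in order to apply the rank inequality $\rank(A+B)\leq \rank A+\rank B$.

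Carrying out the two matrix multiplications in \eqref{deco3} yields
\begin{align*}
  \p4 &= C\n4 + C\n3 R,\\
  \p3 &= C\n3,\\
  \p1 &= C\n1 + L\p3,\\
  \p2 &= L\p4 + C\n1 R.
\end{align*}
The second identity is exactly \eqref{bound0}. Since $\p0$ and the two outer unitriangular factors are invertible, so is the upper block-triangular middle factor, and hence so are both of its diagonal blocks $C\n4$ and $C\n1$. The first and third identities can be rewritten as $C\n4 = \p4 - \p3 R$ and $C\n1 = \p1 - L\p3$, and substituting these into the fourth one yields the single relation $\p2 = L C\n4 + \p1 R$.

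The three remaining bounds now follow by subadditivity of the rank. For \eqref{bound1}, the matrix $C\n1 = \p1 - L\p3$ has full rank $\K$, so $\K \leq \R1 + \rank L$. For \eqref{bound2}, the matrix $C\n4 = \p4 - \p3 R$ has full rank $\T$, so $\T \leq \R4 + \rank R$. For \eqref{bound3}, using that $C\n4$ is invertible,
$$\R2 = \rank \p2 = \rank(L C\n4 + \p1 R) \leq \rank(L C\n4) + \rank(\p1 R) \leq \rank L + \rank R.$$
Finally, \eqref{boundS} is obtained by combining \eqref{bound3} with the sum of \eqref{bound1} and \eqref{bound2}.

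There is no serious obstacle here: the only conceptual step is the observation that the diagonal blocks $C\n4$ and $C\n1$ are forced to be invertible, which is precisely what allows their ranks to be eliminated from the inequalities. The small algebraic rearrangement of the bottom-left identity into the form $\p2 = L C\n4 + \p1 R$ is what isolates the contributions of $L$ and $R$ needed for \eqref{bound3}; everything else is a direct application of the subadditivity and submultiplicativity of the rank.
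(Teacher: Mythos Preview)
Your proof is correct. It follows the same overall strategy as the paper---expand the product in \eqref{deco3}, use the invertibility of the middle factor, and apply rank subadditivity---but the execution differs in two places worth noting.

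For \eqref{bound2}, the paper first establishes (Lemma~\ref{rewriting}) that $\rank R=\rank(\p2-L\p4)$ and then argues via images that $\im(\p2-L\p4)\geq \p2(\ker\p4)$, invoking the preliminary fact $\dim\p2(\ker\p4)=\T-\R4$. You instead exploit the invertibility of $C\n4=\p4-\p3R$ directly, giving $\T\leq \R4+\rank R$ by subadditivity. For \eqref{bound3}, the paper writes $\p2=(\p2-L\p4)+L\p4$ and again uses $\rank R=\rank(\p2-L\p4)$, whereas you rearrange the bottom-left block to $\p2=LC\n4+\p1R$ and bound the two summands. Your route is more symmetric in $L$ and $R$, slightly more elementary (it avoids both Lemma~\ref{rewriting} and the preliminary corollary~\eqref{P2kerP4}), and self-contained for the purpose of Theorem~\ref{bounds}. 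The paper's route has the advantage that Lemma~\ref{rewriting} is reused throughout the constructive parts of the paper, so its detour pays off later.
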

We will prove this theorem in Section~\ref{proofTh1}.  Next, we assert that these bounds are sharp. 

\subsection{Optimal solution} The following theorem shows that the inequality \eqref{boundS} is sharp:
\begin{theor}
\label{master}
If $\p0\in GL_{\T+\K}(\mathbb{K})$, then there exists a decomposition \eqref{deco3} that satisfies
$$
\begin{array}{cc}
\rank R+\rank L = \max(\R2,\K+\T-\R1-\R4)\text{ and}\\
\rank L = \K-\R1.
\end{array}
$$
Additionally, such a decomposition can be computed with $O((\T+\K)^3)$ arithmetic operations.
\end{theor}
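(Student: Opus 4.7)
My plan is to reduce the decomposition \eqref{deco3} to an algebraic condition on $L$ and $R$, and then construct $L$ of the minimum rank $\K-\R1$ while controlling the rank of $R$. Expanding the triple product in \eqref{deco3} and matching blocks gives $C\n3 = \p3$, $C\n1 = \p1 - L\p3$, $C\n4 = \p4 - \p3 R$, together with the equation $(\p1 - L\p3)\, R = \p2 - L\p4$ and the requirement that $C\n1 = \p1 - L\p3$ be invertible (which, combined with the equation, forces the middle factor to be invertible, matching $\p0 \in GL_{\T+\K}(\mathbb{K})$). Given such $L$, $R = C\n1^{-1}(\p2 - L\p4)$ is forced and $\rank R = \rank(\p2 - L\p4)$. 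Hence it suffices to exhibit $L$ of rank $\K-\R1$ such that $\p1 - L\p3$ is invertible and $\rank(\p2 - L\p4) = \max(\R2-\K+\R1,\, \T-\R4)$.

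I adopt a geometric viewpoint. Set $\mathcal{Q}_+ := \im\binom{\p4}{\p2}$ and $\mathcal{Q}_- := \im\binom{\p3}{\p1}$, the column spans of the two block-columns of $\p0$, of dimensions $\T$ and $\K$; since $\p0$ is invertible, $\mathcal{Q}_+ \oplus \mathcal{Q}_- = \mathbb{K}^{\T+\K}$. Represent $L$ by its graph $\mathcal{G}_L := \{(a, La) : a\in\mathbb{K}^\T\}$, a $\T$-dimensional subspace. Writing $\p1 - L\p3 = [-L \; I_\K]\binom{\p3}{\p1}$ and $\p2 - L\p4 = [-L \; I_\K]\binom{\p4}{\p2}$ and noting that $\ker[-L \; I_\K] = \mathcal{G}_L$, one obtains $\rank(\p1 - L\p3) = \K - \dim(\mathcal{G}_L\cap\mathcal{Q}_-)$ and $\rank(\p2 - L\p4) = \T - \dim(\mathcal{G}_L\cap\mathcal{Q}_+)$. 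The task thus becomes: find a graph $\mathcal{G}_L \subset \mathbb{K}^{\T+\K}$ with vertical extent of dimension $\K-\R1$, satisfying $\mathcal{G}_L \cap \mathcal{Q}_- = \{0\}$, and attaining $\dim(\mathcal{G}_L\cap\mathcal{Q}_+) = \min(\R4,\, \T+\K-\R1-\R2)$, the maximum allowed by Theorem~\ref{bounds}.

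To construct $L$, I factor $L = X Y$, where the columns of $X$ form a basis of a carefully chosen $(\K-\R1)$-dimensional complement of $\im \p1$ in $\mathbb{K}^\K$. The invertibility of $\p1 - X Y \p3$ then reduces to requiring that $Y\p3|_{\ker\p1}$ be an isomorphism $\ker\p1 \to \mathbb{K}^{\K-\R1}$: indeed, if $(\p1 - X Y\p3) y = 0$ then $\p1 y = X(Y\p3 y)$, and $\im X \cap \im\p1 = \{0\}$ forces both sides to vanish. Since $\p3|_{\ker\p1}$ is injective (from $\ker\p3 \cap \ker\p1 = \{0\}$), the subspace $\p3(\ker\p1) \subset \mathbb{K}^\T$ has dimension $\K-\R1$ and this condition only constrains $Y$ on $\p3(\ker\p1)$. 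The remaining freedom in $Y$---its values on a complement of $\p3(\ker\p1)$ in $\mathbb{K}^\T$---is used to maximize $\dim(\mathcal{G}_L \cap \mathcal{Q}_+)$: one solves a linear system forcing $\p2 x = X Y \p4 x$ for as many directions $x \in \mathbb{K}^\T$ as possible.

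The main obstacle is proving that the two demands on $Y$ are jointly compatible with the target $\dim(\mathcal{G}_L \cap \mathcal{Q}_+) = \min(\R4,\, \T+\K-\R1-\R2)$. This requires choosing the complement $\im X$ of $\im\p1$ so that the linear system imposed on $Y$ remains solvable on the free directions; feasibility rests on the direct-sum decomposition $\mathcal{Q}_+ \oplus \mathcal{Q}_-$ together with dimension counts on the interactions between $\im\p2$, $\im\p1$, and $\p3(\ker\p1)$. Once a valid $L$ is in hand, $R = C\n1^{-1}(\p2 - L\p4)$ follows by one inversion and multiplication. Each step of the construction---RCEF computations to produce bases of the relevant kernels, images, and complements, assembling and solving the linear system for $Y$, inverting $C\n1$, and the final matrix products---takes $O((\T+\K)^3)$ arithmetic operations, and only a bounded number of such steps are needed.
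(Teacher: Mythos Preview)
Your reduction to the choice of $L$ (the paper's Lemma~\ref{rewriting}) and the graph reformulation $\rank(\p2-L\p4)=\T-\dim(\mathcal G_L\cap\mathcal Q_+)$ are both correct, and your factorization $L=XY$ together with the argument that invertibility of $\p1-XY\p3$ reduces to $Y\p3|_{\ker\p1}$ being an isomorphism is essentially the paper's Lemma~\ref{isSol}. So the skeleton of your approach coincides with the paper's.

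The proof is incomplete, however, precisely at the point you yourself label ``the main obstacle.'' You assert that feasibility ``rests on the direct-sum decomposition $\mathcal Q_+\oplus\mathcal Q_-$ together with dimension counts,'' but you do not carry out those counts, and they are the substance of the theorem. Two concrete issues remain open:
\begin{itemize}
\item[(i)] The choice of $\im X$ is not arbitrary. To force $\p2 x = XY\p4 x$ you need $\p2 x\in\im X$; but $\im X$ is merely a complement of $\im\p1$, and a generic complement need not capture enough of $\im\p2$. The paper resolves this with the ``double complement'' Lemma~\ref{dcomplement}: in the case $\R2\le\T+\K-\R1-\R4$ one chooses $\im X$ to \emph{contain} a complement $\mathcal Y_1$ of $\p2(\ker\p4)$ in $\im\p2$ that also avoids $\im\p1\cap\im\p2$; in the case $\R2>\T+\K-\R1-\R4$ one instead builds $\im X$ \emph{inside} such a complement. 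Without this, the linear system for $Y$ may be infeasible for the required number of directions.
\item[(ii)] The two constraints on $Y$ interact. The subspace $\p3(\ker\p1)$, on which $Y$ is pinned by the invertibility requirement, may meet $\im\p4$ nontrivially, and on that overlap both conditions apply simultaneously. The paper handles this by decomposing $\mathbb K^\T$ into $\mathcal X_1\oplus\mathcal X_2\oplus\mathcal X_3\oplus\p4(\ker\p2)\oplus\mathcal X_4$ with $\mathcal X_2=\p3(\ker\p1)\cap\im\p4$, and building the map $f$ of Lemma~\ref{build} on $\mathcal X_2\oplus\mathcal X_3$ so that its restriction to $\mathcal X_2$ is automatically compatible with the bijectivity requirement on $\p3(\ker\p1)$. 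Your sketch does not address this overlap.
\end{itemize}
In short, your plan matches the paper's architecture, but what you call the ``main obstacle'' is exactly where the work lies; the paper devotes Sections~\ref{p2lower} and~\ref{p2bigger} (one per case) and Lemmas~\ref{dcomplement}, \ref{P2SP4min1}, \ref{cond1}, \ref{build}, and \ref{cond2} to it. Until those compatibility arguments are supplied, the proposal is a plan rather than a proof.
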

We prove this theorem in Section~\ref{p2lower} when $\R2\leq \T+\K-\R1-\R4$, and in Section~\ref{p2bigger} for the case $\R2> \T+\K-\R1-\R4$. In both cases, the proof is constructive and we provide a corresponding algorithm (Algorithms~\ref{alg:L1} and \ref{alg:L2}). Theorem~\ref{master} and the corresponding algorithms are the main contributions of this article.

\mypar{Two cases}
As illustrated in Figure~\ref{boundGraph}, two different cases appear from inequality~\eqref{boundS}. If $\R2\leq \T+\K-\R1-\R4$, bound \eqref{bound3} is not restrictive, and the optimal pair $(\rank L, \rank R)$ is unique, and equals $(\K-\R1,\T-\R4)$. In the other case, where $\R2> \T+\K-\R1-\R4$, bound \eqref{bound3} becomes restrictive, and several optimal pairs $(\rank L, \rank R)$ exist.

\begin{figure}[!ht]
\centering
\begin{tikzpicture}[scale=0.4]
\draw[->,thick](0,0) -- (10,0);
\draw[->,thick](0,0) -- (0,10);
\draw (10,0) node [right] {$\rank R$};
\draw (0,10) node [above] {$\rank L$};
\path [fill=gray!20] (3,2.5)--(3,10)--(10,10)--(10,2.5)--(3,2.5);
\draw (3,2.5) -- (10,2.5);
\draw (3,2.5) -- (3,10);
\draw[loosely dashed,very thin] (3,2.5)--(3,0);
\draw[loosely dashed,very thin] (3,2.5)--(0,2.5);
\draw (3,0) node [below,scale=0.7] {$\T-\R4$};
\draw (0,2.5) node [left,scale=0.7]  {$\K-\R1$};
\draw[loosely dashed,very thin] (1.5,0)--(0,1.5);
\draw (1.5,0) node [below,scale=0.7] {$\R2$};
\draw (0,1.5) node [left,scale=0.7]  {$\R2$};
\node[circle,inner sep=2pt,fill] at (3,2.5) {};
\draw (3,2.5) node [above right,scale=0.7] {Unique Pareto optimum};
\end{tikzpicture}
\begin{tikzpicture}[scale=0.4]
\draw[->,thick](0,0) -- (10,0);
\draw[->,thick](0,0) -- (0,10);
\draw (10,0) node [right] {$\rank R$};
\draw (0,10) node [above] {$\rank L$};
\path [fill=gray!20] (2.5,6.5)--(6,3)--(10,3)--(10,10)--(2.5,10)--(2.5,6.5);
\draw (6,3) -- (10,3);
\draw (2.5,6.5) -- (2.5,10);
\draw[loosely dashed,very thin] (2.5,6.5)--(2.5,0);
\draw[loosely dashed,very thin] (6,3)--(0,3);
\draw (2.5,0) node [below,scale=0.7] {$\T-\R4$};
\draw (0,3) node [left,scale=0.7]  {$\K-\R1$};
\draw[loosely dashed,very thin] (9,0)--(0,9);
\draw (9,0) node [below,scale=0.7] {$\R2$};
\draw (0,9) node [left,scale=0.7]  {$\R2$};
\draw[very thick] (2.5,6.5)--(6,3);
\draw (4,5) node [above right,scale=0.7] {Set of Pareto optima};
\node[circle,inner sep=2pt,fill] at (6,3) {};
\end{tikzpicture}
\caption{Possible ranks for $L$ and $R$. On the left graph, $\R2< \T+\K-\R1-\R4$. On the right graph, $\R2> \T+\K-\R1-\R4$. The dot shows the decomposition provided by Theorem~\ref{master}.}
\label{boundGraph}
\end{figure}

\mypar{Example}
As a simple example we consider the special case 
$$
\p0=\begin{pmatrix}
&\p3\\
\p2&
\end{pmatrix},\quad\text{with }\K=\T.
$$
In this case $\p2,\ \p3$ are non-singular and neither \eqref{deco1} nor \eqref{deco2} exists.
Theorem~\ref{bounds} gives a lower bound of $\rank R + \rank L \geq 2\K$, which implies that
both $R$ and $L$ have full rank. Straightforward computation shows that for any non-singular $L$,
$$
\p0=\begin{pmatrix}
 I_\K &  \\
 L&  I_\K  
\end{pmatrix}\begin{pmatrix}
L^{-1}\p2 & \p3 \\
 &  -L\p3  
\end{pmatrix}\begin{pmatrix}
 I_\K &  \\
 -(L\p3)^{-1}\p2&  I_\K  
\end{pmatrix}
$$
is an optimal solution. This also shows that the optimal decomposition \eqref{deco3} is in general not unique.

\subsection{Flexibility} 

The following theorem adds flexibility to Theorem~\ref{master} and shows that a decomposition exists for any Pareto-optimal pair of non-diagonal ranks that satisfies the bounds of Theorem~\ref{bounds}:
\begin{theor}
\label{master2}
If $\p0\in GL_{\T+\K}(\mathbb{K})$ and $(l,r)\in \mathbb{N}^2$ satisfies
$l\geq \K-\R1,\ r\geq \T-\R4$, and
$$
r+l= \max(\R2,\K+\T-\R1-\R4),
$$
then $\p0$ has a decomposition \eqref{deco3} with $\rank L=l$ and $\rank R=r$.
\end{theor}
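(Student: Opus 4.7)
The plan is to deduce Theorem~\ref{master2} from Theorem~\ref{master} by redistributing rank between the two outer factors of the decomposition the latter provides. If $\R2 \leq \T + \K - \R1 - \R4$, the only admissible pair is $(l, r) = (\K - \R1, \T - \R4)$, which Theorem~\ref{master} already produces, so the substance lies entirely in the case $\R2 > \T + \K - \R1 - \R4$, on which I focus.

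In that case, let $\p0 = \mathcal{L}_0 \mathcal{C}_0 \mathcal{R}_0$ denote the decomposition from Theorem~\ref{master}, with $\rank L_0 = \K - \R1$ and $\rank R_0 = \R2 - \K + \R1$, and set $k = l - (\K - \R1) \in \{0, \ldots, \R1 + \R2 + \R4 - \T - \K\}$. Using a rank factorization $R_0 = U V^T$, I would partition the columns of $U$ and $V$ to write $R_0 = R_1 + R_2$ with $\rank R_1 = \rank R_0 - k$ and $\rank R_2 = k$. Because block lower unitriangular matrices of the form $\begin{pmatrix} I & 0 \\ \ast & I \end{pmatrix}$ compose additively in their off-diagonal block, $\mathcal{R}_0 = \mathcal{R}_2 \mathcal{R}_1$ with $\mathcal{R}_i = \begin{pmatrix} I & 0 \\ R_i & I \end{pmatrix}$. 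The key manipulation is to push $\mathcal{R}_2$ through $\mathcal{C}_0$ via a block LU factorization $\mathcal{C}_0 \mathcal{R}_2 = \tilde{\mathcal{L}} \tilde{\mathcal{C}}$; a direct calculation yields $\tilde{\mathcal{L}} = \begin{pmatrix} I & 0 \\ M & I \end{pmatrix}$ with $M = C\n1 R_2 (C\n4 + C\n3 R_2)^{-1}$, and a block upper triangular $\tilde{\mathcal{C}}$ whose diagonal blocks are $C\n4 + C\n3 R_2$ and $C\n1 - M C\n3$. Absorbing $\tilde{\mathcal{L}}$ into $\mathcal{L}_0$ yields $\p0 = (\mathcal{L}_0 \tilde{\mathcal{L}}) \tilde{\mathcal{C}} \mathcal{R}_1$, a decomposition of the required form whose new off-diagonal blocks are $L_0 + M$ and $R_1$, with $\rank R_1 = r$ by construction.

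The main obstacle is to choose $R_2$ so that (i) both $C\n4 + C\n3 R_2$ and $C\n1 - M C\n3$ are invertible, so $\tilde{\mathcal{C}}$ is valid, and (ii) $\rank(L_0 + M) = \K - \R1 + k = l$, equivalently the column (and row) spaces of $L_0$ and $M$ intersect trivially. Parameterizing $R_2 = U_0 X V_0^T$ for a thin factorization $R_0 = U_0 V_0^T$ and a rank-$k$ matrix $X$ of size $\rank R_0 \times \rank R_0$, the admissible set for $R_2$ is a variety of dimension $k(2 \rank R_0 - k)$, which is positive precisely when $k > 0$; the Pareto bound $k \leq \R1+\R2+\R4-\T-\K$ ensures $k \leq \rank R_0$ so that the stratum is non-empty. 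Conditions (i) and (ii) each cut out Zariski-open subsets of this stratum---condition (i) holds at $X = 0$ and therefore on a Zariski-open neighborhood, and condition (ii) excludes a proper subvariety that becomes avoidable once the stratum has positive dimension---and I expect to show they intersect, ideally by exhibiting an explicit $R_2$ adapted to the structure of $L_0$ so that $\im M = C\n1 \im R_2$ lies in general position with respect to $\im L_0$.
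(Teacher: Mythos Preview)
Your approach is genuinely different from the paper's. The paper works on $L$ rather than on $R$: starting from the solution of Theorem~\ref{master}, it iteratively constructs a rank-$1$ matrix $L'$ such that $L+L'$ is still admissible, with $\rank(L+L')=\rank L+1$ and $\rank(\p2-(L+L')\p4)=\rank(\p2-L\p4)-1$. The construction is completely explicit: a vector $z$ is chosen outside $\ker(\p2-L\p4)+\ker\p4$ and, via Lemma~\ref{dcomplement}, outside $C^{-1}\p4\ker(\p2-L\p4)$ (where $C$ is the top-left block of the middle factor), and $L'$ is then prescribed on an adapted basis of $\mathbb{K}^\T$. No genericity is invoked; the argument works verbatim over any field.

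That is exactly where your proposal has a real gap. Your plan for satisfying conditions (i) and (ii) is a dimension-count/Zariski-openness argument: each condition cuts out an open subset of the rank-$k$ stratum, and you ``expect'' them to intersect. But Theorem~\ref{master2} is stated over an arbitrary field $\mathbb{K}$, and the paper's motivating application (Section~\ref{sec:SLP}) is $\mathbb{K}=\mathbb{F}_2$. Over a finite field a non-empty Zariski-open set need not contain a rational point, and two such sets need not meet in one; your argument, as written, does not survive this. (Even over an infinite field the argument needs tightening: $X=0$ is not in the rank-$k$ stratum, so ``(i) holds at $X=0$'' only tells you (i) holds on an open set whose intersection with the stratum you still have to exhibit.) A secondary issue is your parameterization $R_2=U_0XV_0^T$: requiring $\rank X=k$ does not force $\rank(I-X)=\rank R_0-k$, so $\rank R_1=r$ is not automatic; you would need to restrict further, e.g.\ to $X$ with $\rank X+\rank(I-X)=\rank R_0$, shrinking the admissible set again.

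If you want to keep the ``push a piece of $R$ through the middle'' idea, you will have to replace the genericity step by an explicit choice of $R_2$ that verifiably meets (i) and (ii) over every field. That is essentially the same amount of work the paper does on the $L$ side, and the paper's Lemma~\ref{dcomplement} (the double-complement construction) is the tool that makes such explicit choices possible.
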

In the case where $\R2\leq \T+\K-\R1-\R4$, the decomposition produced by Theorem~\ref{master} has already the unique optimal pair $(\rank L, \rank R) = (\K-\R1,\T-\R4)$. In the other case, we will provide a method in Section~\ref{rkexc} to trade between the rank of $R$ and the rank of $L$, until bound~\eqref{bound2} is reached. By iterating this method over the decomposition obtained in Theorem~\ref{master}, decompositions with various rank tradeoffs can be built.

Therefore, it is possible to build decomposition \eqref{deco3} for any pair $(\rank L, \rank R)$ that is a Pareto optimum of the given set of bounds. As a consequence, if $f: \mathbb{N}^2\rightarrow \mathbb{R}$ is weakly increasing in both of its arguments, it is possible to find a decomposition that minimizes $f(\rank L,\rank R)$. Examples include $\min(\rank L, \rank R)$, $\max(\rank L, \rank R)$, $\rank L + \rank R$, $\rank L \cdot \rank R$ or $\sqrt{\rank^2 L + \rank^2 R}$.

\mypar{Generalization of block LU factorization}
In the case where $\p1$ is non-singular, Theorem~\ref{master} provides a decomposition that satisfies $\rank L=0$. In other words, it reduces to the decomposition \eqref{deco2}. Using Theorem~\ref{master2}, we can obtain a similar result in the case where $\p4$ is non-singular. Since in this case $\T-\R4=0$, it is possible to choose $r=0$, and thus obtain the decomposition~\eqref{deco1}.

\subsection{Equivalent formulations}

\begin{lem}
The following decomposition is equivalent to decomposition~\eqref{deco3}, with analogous constraints for the non-diagonal ranks:
\begin{equation}
\label{deco4}
\p0=\begin{pmatrix}
I_\T & \\
L\n2 & L\n1\\
\end{pmatrix} \begin{pmatrix}
C\n4 & C\n3\\
 & I_\K\\
\end{pmatrix}
\begin{pmatrix}
I_\T & \\
R\n2 & R\n1\\
\end{pmatrix}
\end{equation}
In this case, the minimization of the non-diagonal ranks is exactly the same problem as in \eqref{deco3}. However, an additional degree of freedom appears: any non-singular $\K\times \K$ matrix can be chosen for either $L\n1$ or $R\n1$.

It is also possible to decompose $\p0$ into two matrices, one with a non-singular leading principal submatrix $L\n4$ and the other one with a non-singular lower principal submatrix $R\n1$: 
\begin{equation}
\label{deco5}
\p0=\begin{pmatrix}
L\n4 & L\n3\\
L\n2 & L\n1\\
\end{pmatrix} 
\begin{pmatrix}
R\n4 & R\n3\\
R\n2 & R\n1\\
\end{pmatrix}
\end{equation}
Once again, the minimization of $\rank L\n2 + \rank R\n2$ is the same problem as in \eqref{deco3}. The two other non-diagonal blocks satisfy $\rank L\n3+\rank R\n3\geq\R3$.
\end{lem}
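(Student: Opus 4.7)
The plan is to reduce all three claims to elementary block-matrix manipulations, using the fact that invertibility of~$\p0$ forces invertibility of the two diagonal blocks of the middle matrix in~\eqref{deco3}, and hence also of $L\n1,R\n1$ in~\eqref{deco4} and of $L\n4,R\n1$ in~\eqref{deco5}.

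To pass from~\eqref{deco3} to~\eqref{deco4}, I would split
$\begin{pmatrix}C\n4&C\n3\\&C\n1\end{pmatrix}=\begin{pmatrix}C\n4&C\n3C\n1^{-1}\\&I\end{pmatrix}\begin{pmatrix}I&\\&C\n1\end{pmatrix}$
and absorb the right-hand factor into the right factor of~\eqref{deco3}, producing~\eqref{deco4} with $L\n1=I$, $R\n1=C\n1$, and the same off-diagonal ranks (absorbing on the left gives the symmetric variant $L\n1=C\n1$, $R\n1=I$). For the reverse direction, the identities
$\begin{pmatrix}I&\\L\n2&L\n1\end{pmatrix}=\begin{pmatrix}I&\\L\n2&I\end{pmatrix}\begin{pmatrix}I&\\&L\n1\end{pmatrix}$
and
$\begin{pmatrix}I&\\R\n2&R\n1\end{pmatrix}=\begin{pmatrix}I&\\&R\n1\end{pmatrix}\begin{pmatrix}I&\\R\n1^{-1}R\n2&I\end{pmatrix}$
let me pull out the diagonal blocks, which then absorb into the middle matrix and recover~\eqref{deco3}, again preserving off-diagonal ranks. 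Bound~\eqref{bound0} carries over to~\eqref{deco4} because the $(1,2)$-block of $\p0$ equals $C\n3 R\n1$, whose rank equals $\rank C\n3$ since $R\n1$ is invertible. For the extra degree of freedom, inserting $\begin{pmatrix}I&\\&X\end{pmatrix}\begin{pmatrix}I&\\&X^{-1}\end{pmatrix}=I$ between the middle and right factors of~\eqref{deco4} and regrouping replaces $L\n1$ by $L\n1 X$ and $(R\n2,R\n1)$ by $(X^{-1}R\n2,X^{-1}R\n1)$ without changing any off-diagonal rank; choosing $X=L\n1^{-1}A$ makes $L\n1$ equal to any prescribed non-singular $A$, and a symmetric substitution works for $R\n1$.

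For the equivalence with~\eqref{deco5}, I would multiply the middle and right factors of~\eqref{deco4} to obtain
$\begin{pmatrix}C\n4&C\n3\\&I\end{pmatrix}\begin{pmatrix}I&\\R\n2&R\n1\end{pmatrix}=\begin{pmatrix}C\n4+C\n3R\n2&C\n3R\n1\\R\n2&R\n1\end{pmatrix}$,
whose bottom-right block is the invertible $R\n1$; combined with the left factor of~\eqref{deco4}, whose top-left block is $I$, this is~\eqref{deco5} with the same $\rank L\n2,\rank R\n2$. Conversely, given~\eqref{deco5}, a block-LU factorization of the left factor (using $L\n4$ invertible) and a block-UL factorization of the right factor (using $R\n1$ invertible) merge into a decomposition of the form~\eqref{deco3} with $L=L\n2 L\n4^{-1}$ and $R=R\n1^{-1}R\n2$, so $\rank L+\rank R=\rank L\n2+\rank R\n2$. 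Finally, the $(1,2)$-block of~\eqref{deco5} equals $\p3=L\n4 R\n3+L\n3 R\n1$, and subadditivity of rank together with the invertibility of $L\n4$ and $R\n1$ yields $\R3\leq\rank R\n3+\rank L\n3$. None of the individual manipulations is technically deep; the main obstacle is bookkeeping, namely verifying in each direction that the required invertibility and structural conditions survive the transformation and that the off-diagonal ranks map exactly as stated.
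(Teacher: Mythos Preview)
Your argument is correct and uses the same elementary block manipulations as the paper; the only cosmetic difference is that the paper closes the equivalence via a single cycle \eqref{deco3}$\Rightarrow$\eqref{deco4}$\Rightarrow$\eqref{deco5}$\Rightarrow$\eqref{deco3} (absorbing $C\n1$ into the left factor and, for \eqref{deco4}$\Rightarrow$\eqref{deco5}, multiplying the two left factors) rather than proving both directions for each pair. Your write-up is in fact more complete than the paper's: you explicitly justify the extra degree of freedom for $L\n1$ (or $R\n1$) via the insertion of $\begin{pmatrix}I&\\&X\end{pmatrix}\begin{pmatrix}I&\\&X^{-1}\end{pmatrix}$, and you prove the inequality $\rank L\n3+\rank R\n3\geq \R3$ from $\p3=L\n4 R\n3+L\n3 R\n1$, both of which the paper asserts but does not argue.
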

\begin{proof}
The lower non-diagonal ranks are invariant through the following steps:

\mypar{$\eqref{deco3}\Rightarrow \eqref{deco4}$}
If $\p0$ has a decomposition \eqref{deco3}, a straightforward computation shows that:
$$
\p0=\begin{pmatrix}
I_\T & \\
L & C\n1\\
\end{pmatrix} \begin{pmatrix}
C\n4 & C\n3\\
 & I_\K\\
\end{pmatrix}
\begin{pmatrix}
I_\T & \\
R & I_\K\\
\end{pmatrix},
$$
which has the form of decomposition \eqref{deco4}.

\mypar{$\eqref{deco4}\Rightarrow \eqref{deco5}$}
If $\p0$ has a decomposition \eqref{deco4}, the multiplication of the two left factors leads to formulation \eqref{deco5}. In fact, $L\n4=C\n4$ and $R\n1$ are both non-singular.

\mypar{$\eqref{deco5}\Rightarrow \eqref{deco3}$}
If $\p0$ has a decomposition \eqref{deco5}, then using \eqref{deco1} on the left factor, and \eqref{deco2} on the right factor, and multiplying the two central matrices leads to formulation \eqref{deco3}.
\end{proof}

\subsection{Related work}
\mypar{Schur complement}
Several efforts have been made to adapt the definition of Schur complement in the case of general $\p4$ and $\p1$. For instance, it is possible to define an indexed Schur complement of another non-singular principal submatrix~\cite{Zhang:06}, or use pseudo-inverses~\cite{Carlson:74} for matrix inversion algorithms.

\mypar{Alternative block decompositions}
A common way to handle the case where $\p4$ is singular is to use a permutation matrix $B$ that reorders the columns of $\p0$ such that the new principal upper submatrix is non-singular~\cite{Zhang:06}. Decomposition \eqref{deco1} then becomes:
$$
\p0=\p0BB^T=
\begin{pmatrix}
I_\T & \\
L & I_\K\\
\end{pmatrix}
 \begin{pmatrix}
C\n4 & C\n3\\
 & C\n1\\
\end{pmatrix}
B^T
$$
However, $B$ needs to swap columns with index $\geq m$; thus $B^T$ does not have the required form considered in our work.

One can modify the above idea to choose $B$ such that $B^{-1}$ has the shape required by decomposition \eqref{deco3}:
\begin{eqnarray*}
P & = &
\begin{pmatrix}P\n4 & P\n3\\P\n2 & P\n1\end{pmatrix}
\begin{pmatrix} I_\T \\ -R & I_\K \end{pmatrix} 
\begin{pmatrix} I_\T \\ R & I_\K \end{pmatrix} \\
& = & 
\begin{pmatrix}P\n4 - RP\n3 & P\n3\\P\n2 - RP\n1 & P\n1\end{pmatrix}
\begin{pmatrix} I_\T \\ R & I_\K \end{pmatrix}
\end{eqnarray*}
Then the problem is to design $R$ such that $P\n4 - RP\n3$ is non-singular and $\rank (P\n2 - RP\n1) + \rank R$ is minimal.
This basic idea is used in \cite{Pueschel:09}, where, however, only $\rank R$ is minimized, which, in general, does not produce optimal solutions for the problem considered here.


Finally, our decomposition also shares patterns with a block Cholesky decomposition, or the Block LDL decomposition, in the sense that they involve block uni-triangular matrices. However, the requirements on $\p0$ and the expectations on the decomposition are different.


\section{Application: Optimal Circuits for Streaming Permutations}\label{sec:SLP}

The original motivation for considering our decomposition \eqref{deco3} was an important application in the domain of hardware design. Many algorithms in signal processing and communication consist of alternating computation and reordering (permutation) stages. Typical examples include fast Fourier transforms \cite{Tolimieri:97}; one example (a so-called Pease FFT) is shown in Fig.~\ref{fig:fft} for 16 data points. When mapped to hardware, permutations could become simple wires. However, usually, the data is not available in one cycle, but streamed in chunks over several cycles. Implementing such a ``streaming'' permutation in this scenario on an application-specific integrated circuit (ASIC) or on a field programmable gate array (FPGA) becomes complex, as it now requires both logic and memory \cite{Parhi:92,Pueschel:09}. It turns out that for an important subclass of permutations called ``linear,'' the design of an optimal circuit (i.e., one with minimal logic) is equivalent to solving \eqref{deco3} in the field $\mathbb{F}_2$. 

\begin{figure}\centering
\includegraphics[scale=.45]{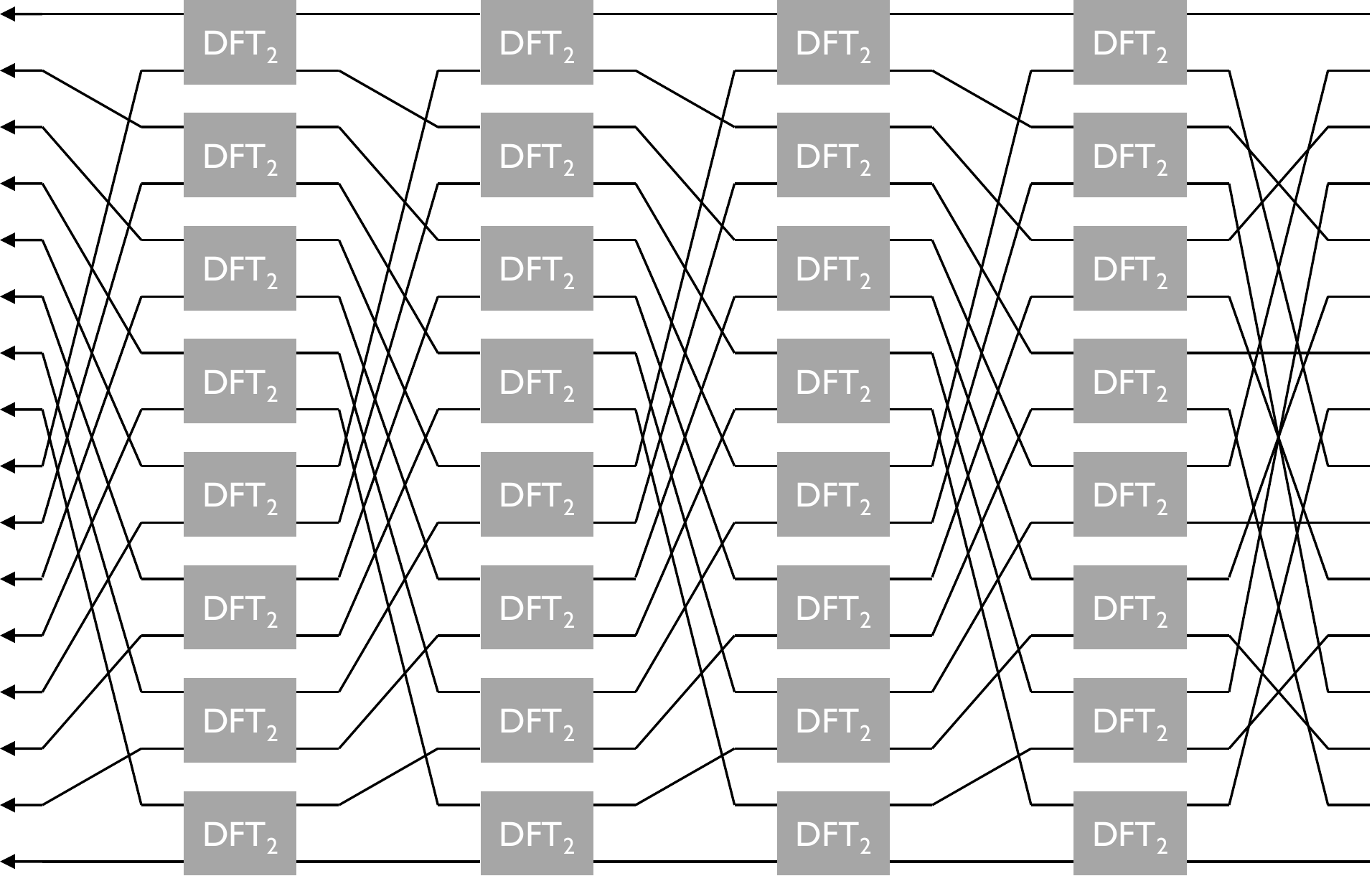}
\caption{Dataflow of a Pease FFT on $16$ points from right (input) to left (output). Computing stages, consisting of DFTs on two points and pointwise multiplications (not shown), alternate with permutations.}\label{fig:fft}
\end{figure}

Next, we provide a few more details on this application starting with the necessary background information. However, we only provide a sketch; a more complete treatment can be found in \cite{Pueschel:09}.


\subsection{Linear permutations}

For $0\leq i < 2^{\T+\K}$, we denote with $i_b$ the associated (bit) vector in $\mathbb{F}_2^{\T+\K}$ that contains the radix-$2$ digits of $i$, with the most significant digit on top. For instance, for $m+n=4,$ we have $$12_b= \begin{pmatrix}1\\1\\0\\0\end{pmatrix}\text{ and } 7_b=\begin{pmatrix}0\\1\\1\\1\end{pmatrix}.$$ Any invertible $(\T+\K)\times (\T+\K)$ matrix $P$ over $\mathbb{F}_2$ induces a permutation $\pi(P)$ on $\{0,\dots,2^{\T+\K}-1\}$ that maps $i$ to $j$, where $j_b=P\cdot i_b$. 

For example, if we define $$V_{\T+\K}=\begin{pmatrix}1&&\\\vdots&\ddots&\\1&&1\end{pmatrix},$$ then $\pi(V_3)$ is the mapping $0\mapsto 0$, $1\mapsto 1$,$2\mapsto 2$,$3\mapsto 3$,$4\mapsto 7\mapsto 4$, $5\mapsto 6 \mapsto 5$. More generally, $\pi(V_{\T+\K})$ is the permutation that leaves the first half of the elements unchanged, and that reverts the second half.

The mapping $\pi: GL_{\T+\K}(\mathbb{F}_2) \rightarrow S_{2^{\T+\K}}$ is a group-homomorphism, and its range is called the group of linear permutations \cite{Pease:77,Lenfant:85}. This group contains many of the permutations used in signal processing and communication algorithms, including stride permutations, bit-reversal, Hadamard reordering, and the Grey code reordering.

\subsection{Streamed linear permutations (SLP)}

We want to implement a circuit that performs a linear permutation on $2^{\T+\K}$ points. If we assume that this circuit has $2^\K$ input (and output) ports, this means that the dataset has to be split into $2^\T$ parts that are fed (streamed) as input over $2^\T$ cycles. Similarly, the permuted output will be produced streamed over $2^\T$ cycles. As an example consider Fig.~\ref{fig:streamed} in which $2^\K = 2$ and $2^\T=4$.

\begin{figure}\centering
   \includegraphics[scale=.8]{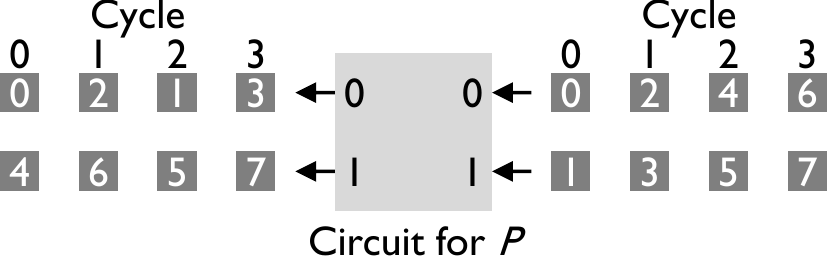}
\caption{Sketch of an implementation of the bit reversal permutation on $2^3$ elements streamed on two ports. The dataset enters within $4$ cycles (right), and is retrieved within $4$ cycles (left).}\label{fig:streamed}
\end{figure}

With this convention, the element with the index $i=c\cdot 2^\T + p$ arrives as input in the $c^{th}$ cycle at the $p^{th}$ port. Particularly, the upper $\T$ bits of $i_b$ are the bit representation $c_b$ of the input cycle $c$, while the lower $\K$ bits are $p_b$. For example, if $\T=3$ and $\K=2$, then the element indexed with $$14_b=\begin{pmatrix}0\\1\\1\\\hline 1\\0\end{pmatrix}=\begin{pmatrix}3_b\\\hline 2_b\end{pmatrix}$$ will arrive during the third cycle on the second port.

Thus, it is natural to block the desired linear permutation
$$
P=\begin{pmatrix}\p4 & \p3 \\ \p2 & \p1\end{pmatrix}
$$ 
such that $\p4$ is $\T\times \T$. This implies that the element that arrives on port $p$ during the $c^{th}$ cycle has to be routed to the output port $\p2 c_b + \p1 p_b$ at output cycle $\p4 c_b + \p3 p_b$.

$\p2$ has a particular role here, as it represents ``how the routing between the different ports must vary during time'', and directly influences the complexity of the implementation. For example, if $\p2 = 0$, then the output is always $\p1 p_b$ without variation during time.


\subsection{Implementing SLPs on hardware}

Two special cases of SLPs can be directly translated to a hardware implementation. The first kind are the permutations that only permute across time, i.e., that do not change the port number of the elements. Thus, they satisfy $\p2 c_b + \p1 p_b = p_b$ for all $c$ and $p$, and therefore have the form 
$$
P=\begin{pmatrix}\p4 & \p3 \\ & I_\K\end{pmatrix}.
$$ 
These SLPs can be implemented using an array of $2^\K$ blocks of RAMs as shown in Fig.~\ref{fig:RAM}.

\begin{figure}\centering
\subfigure[RAM bank array]{
   \includegraphics[scale=.6]{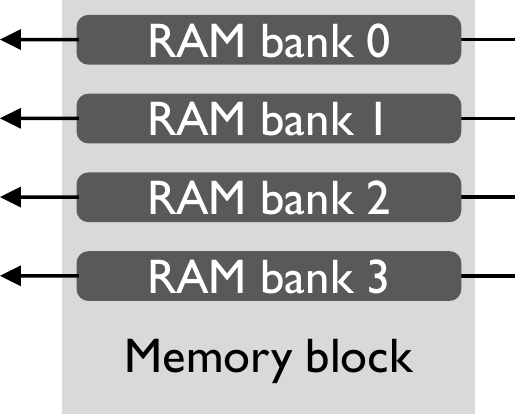}\label{fig:RAM}}
	\quad\quad
\subfigure[Switching network]{
   \includegraphics[scale=.6]{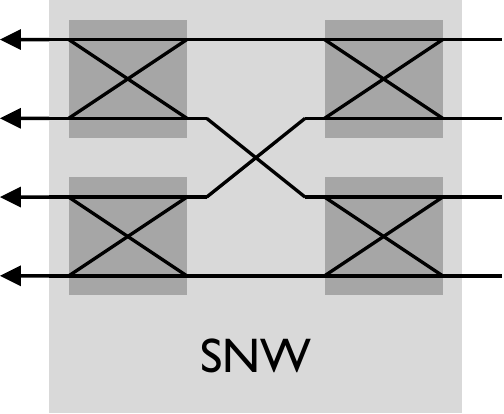}\label{fig:SWN}}
\caption{Two hardware blocks to implement particular types of SLPs. On the left, an array of $4$ RAM banks. Write and read addresses are used to delay differently elements. This block can be used to implement SLPs that do not permute across ports. On the right is an example of a switching network consisting of $4$ \ttt-switches. Switching networks can be used to implement SLPs that do not permute across time.}
\end{figure}

Conversely, SLPs that only permute across the ports within each cycle have the form 
$$
P=\begin{pmatrix}I_\T &  \\ \p2 & \p1\end{pmatrix}.
$$ 
They do not require memory and can be implemented using a network of $\R2\cdot 2^{\K-1}$ \ttt-switches as was shown in \cite{Pueschel:09}. This result, combined with decomposition~\eqref{deco4} and Theorem~\ref{master} yields an immediate corollary:

\begin{theor}
For a given $P$, the associated SLP can be implemented using a RAM bank array (Fig.~\ref{fig:RAM}) framed by two switching networks (Fig.~\ref{fig:SWN}) with a total of $\max(\R2,\K+\T-\R1-\R4)\cdot 2^{\K-1}$ \ttt-switches.
\end{theor}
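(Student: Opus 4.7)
The plan is a direct assembly of ingredients already established in the excerpt: the equivalent formulation \eqref{deco4}, the optimality result in Theorem~\ref{master}, and the two elementary hardware realizations recalled in the previous subsection.

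First I would invoke Theorem~\ref{master} to produce a decomposition \eqref{deco3} of $\p0$ with $\rank L + \rank R = \max(\R2,\K+\T-\R1-\R4)$, and then pass to the equivalent decomposition \eqref{deco4} via the lemma on equivalent formulations; as noted there, the lower non-diagonal ranks are preserved, so we have
\[
\rank L\n2 + \rank R\n2 \;=\; \max(\R2,\K+\T-\R1-\R4).
\]
Since $\pi: GL_{\T+\K}(\mathbb{F}_2)\to S_{2^{\T+\K}}$ is a group homomorphism, the SLP associated with $\p0$ is the composition of the SLPs associated with the three factors of \eqref{deco4}, which can therefore be implemented by three circuit blocks wired in series.

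Next I would match each factor to one of the two hardware templates recalled just before the theorem. The middle factor
\[
\begin{pmatrix} C\n4 & C\n3 \\ & I_\K \end{pmatrix}
\]
has a bottom-right block equal to $I_\K$, so the induced SLP does not permute across ports and can be realized by a single RAM bank array as in Fig.~\ref{fig:RAM}. The two outer factors
\[
\begin{pmatrix} I_\T & \\ L\n2 & L\n1 \end{pmatrix} \quad\text{and}\quad \begin{pmatrix} I_\T & \\ R\n2 & R\n1 \end{pmatrix}
\]
have a top-left block equal to $I_\T$, so their SLPs do not permute across time; by the result from \cite{Pueschel:09} quoted earlier, each is realized by a switching network of \ttt-switches whose count equals $2^{\K-1}$ times the rank of the lower-left block, namely $\rank L\n2 \cdot 2^{\K-1}$ and $\rank R\n2 \cdot 2^{\K-1}$ respectively.

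Summing the three contributions yields a circuit with a total of $(\rank L\n2 + \rank R\n2)\cdot 2^{\K-1} = \max(\R2,\K+\T-\R1-\R4)\cdot 2^{\K-1}$ \ttt-switches, framed around the RAM bank array, as claimed. There is no real obstacle here since all the heavy lifting is done in Theorem~\ref{master}; the only thing to keep in mind is that the switch count quoted from \cite{Pueschel:09} is stated for the exact template with $I_\T$ in the upper-left corner, so the use of formulation \eqref{deco4} (rather than \eqref{deco3}) is essential to making the outer blocks directly realizable without having to account for the extra identity in the $(2,2)$-block.
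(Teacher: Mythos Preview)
Your proof is correct and follows exactly the route the paper indicates: the theorem is presented there as an immediate corollary of Theorem~\ref{master}, decomposition~\eqref{deco4}, and the switching-network cost from \cite{Pueschel:09}, and you have spelled this out faithfully. One small correction to your closing remark: the reason \eqref{deco4} is preferred to \eqref{deco3} is not the outer factors---in both formulations they have $I_\T$ in the upper-left and fit the switching-network template---but the \emph{middle} factor, which in \eqref{deco3} carries $C\n1$ rather than $I_\K$ in the $(2,2)$-block and hence does not directly match the RAM-bank template.
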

Our Algorithms~\ref{alg:L1} and \ref{alg:L2}, introduced later, provide an efficient method to compute this architecture. 


%
%

\section{Preliminaries}\label{preliminaries}

In this section, we will prove some basic lemmas that we will use throughout this article.

\subsection{Properties of the blocks of an invertible matrix}

In this subsection, we derive some direct consequences of the invertibility of $\p0$ on the range and the nullspace of its submatrices.
\begin{lem}
The following properties are immediate from the structure of $\p0$:
\begin{equation}
\label{P1P3}
\ker \p1 \cap \ker \p3 = \{0\}
\end{equation}
\begin{equation}
\label{P2P4}
\ker \p2 \cap \ker \p4 = \{0\}
\end{equation}
\begin{equation}
\label{P1P2}
\im \p1+\im \p2 = \mathbb{K}^{\K}
\end{equation}
\begin{equation}
\label{P3P4}
\im \p3+\im \p4 = \mathbb{K}^{\T}
\end{equation}
\begin{equation}
\label{P1kerP3P2kerP4}
\p2(\ker \p4)\cap \p1(\ker \p3)=\{0\}
\end{equation}
\begin{equation}
\label{P3kerP1P4kerP2}
\p4(\ker \p2)\cap \p3(\ker \p1)=\{0\}
\end{equation}
\end{lem}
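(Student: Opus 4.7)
The overall approach is that every one of the six statements is a direct consequence of the non-singularity of $P_0$, expressed either through injectivity ($\ker P_0 = \{0\}$) or through the fact that the rows/columns of $P_0$ have full rank. The six properties split naturally into three symmetric pairs, so I would argue one of each pair and remark that the other follows by the obvious transposition of block rows.

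For \eqref{P1P3} and \eqref{P2P4}, the plan is to exploit injectivity by testing $P_0$ on a vector that is zero on one block. For \eqref{P1P3}, if $x\in\ker P_1\cap\ker P_3$, I consider the column vector $\begin{pmatrix}0\\ x\end{pmatrix}\in\mathbb{K}^{\T+\K}$ and observe that $P_0\begin{pmatrix}0\\ x\end{pmatrix}=\begin{pmatrix}P_3x\\ P_1x\end{pmatrix}=0$, so $x=0$ because $P_0$ is non-singular. Statement \eqref{P2P4} is the mirror argument applied to $\begin{pmatrix}x\\ 0\end{pmatrix}$.

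For \eqref{P1P2} and \eqref{P3P4}, I would use the dual picture: the lower block-row $(P_2\ P_1)$ is an $\K\times(\T+\K)$ submatrix of the invertible $P_0$, hence has full row rank $\K$. Consequently its column space, which is $\im P_2+\im P_1$, equals $\mathbb{K}^\K$. The same argument on the upper block-row gives \eqref{P3P4}.

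Finally, for \eqref{P1kerP3P2kerP4} and \eqref{P3kerP1P4kerP2}, the trick is to encode both kernel conditions simultaneously into a single element of $\ker P_0$. For \eqref{P1kerP3P2kerP4}, suppose $v\in P_2(\ker P_4)\cap P_1(\ker P_3)$, and write $v=P_2x=P_1y$ with $P_4x=0$ and $P_3y=0$. Then
$$
P_0\begin{pmatrix}x\\ -y\end{pmatrix}
=\begin{pmatrix}P_4x-P_3y\\ P_2x-P_1y\end{pmatrix}=0,
$$
so $(x,-y)^T=0$ and thus $v=0$. The proof of \eqref{P3kerP1P4kerP2} is identical after swapping the roles of the two block rows. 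None of the steps presents a real obstacle; the only minor subtlety is recognizing the right combined test vector in the last pair, which is what makes these two properties slightly less immediate than the other four.
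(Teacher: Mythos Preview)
Your proof is correct and follows exactly the approach of the paper: the paper only spells out the argument for \eqref{P1kerP3P2kerP4}, using the same test vector $\begin{pmatrix}x\\ -y\end{pmatrix}$ and the non-singularity of $P$, and leaves the remaining statements as immediate. Your treatment of the other five items is the natural completion of what the paper omits, with no substantive difference in method.
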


\begin{proof}
We prove here equation~\eqref{P1kerP3P2kerP4}. If $x\in \ker \p4$ and $y\in \ker \p3$ satisfy $\p2x=\p1y$, we have:$$\begin{pmatrix}
\p4 & \p3\\
\p2 & \p1\\
\end{pmatrix}\cdot \begin{pmatrix}
x\\
-y\\
\end{pmatrix}=\begin{pmatrix}
\p4x-\p3y\\
\p2x-\p1y \\
\end{pmatrix}=\begin{pmatrix}
0\\
0\\
\end{pmatrix}$$
Since $\p0$ is non-singular, $x=y=0$, as desired.
\end{proof}

These equalities yield the dimensions of the following subspaces:

\begin{cor}
\begin{equation}
\label{P3kerP1}
\dim \p3(\ker \p1)=\dim \ker \p1 = \K-\R1
\end{equation}
\begin{equation}
\dim \p4(\ker \p2)=\dim \ker \p2 = \T-\R2
\end{equation}
\begin{equation}
\dim \p1(\ker \p3)=\dim \ker \p3 = \K-\R3
\end{equation}
\begin{equation}
\label{P2kerP4}
\dim \p2(\ker \p4)=\dim \ker \p4 = \T-\R4
\end{equation}
\begin{equation}
\label{P1interP2}
\dim \im \p1\cap \im \p2=\R1+\R2-\K
\end{equation}
\begin{equation}
\dim \im \p3\cap \im \p4=\R3+\R4-\T
\end{equation}
\end{cor}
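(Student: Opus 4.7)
The plan is to derive all six dimension identities as routine consequences of the previous lemma, using only rank--nullity and Grassmann's formula.

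First I would handle the four ``image of a kernel'' identities together, since they follow the same pattern. For \eqref{P3kerP1}, observe that $\p3$, viewed as a linear map on $\ker \p1 \subseteq \mathbb{K}^{\K}$, has trivial kernel by \eqref{P1P3}: an $x \in \ker \p1$ lying in $\ker \p3$ would be in $\ker \p1 \cap \ker \p3 = \{0\}$. Hence the restriction of $\p3$ to $\ker \p1$ is injective, and so $\dim \p3(\ker \p1) = \dim \ker \p1 = \K - \R1$ by rank--nullity applied to $\p1$. The other three equations are identical in form: use \eqref{P2P4} to inject $\p4$ on $\ker \p2$ and $\p2$ on $\ker \p4$, and \eqref{P1P3} once more to inject $\p1$ on $\ker \p3$, then apply rank--nullity to the appropriate block to compute the dimension of the kernel.

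For the last two identities, I would invoke Grassmann's formula $\dim(U+V) = \dim U + \dim V - \dim(U \cap V)$. Applying it to $U = \im \p1$ and $V = \im \p2$ and using \eqref{P1P2} (which forces $\dim(U+V) = \K$) together with $\dim \im \p1 = \R1$ and $\dim \im \p2 = \R2$ immediately yields \eqref{P1interP2}. The symmetric argument, using \eqref{P3P4} with $\dim \im \p3 = \R3$, $\dim \im \p4 = \R4$, and ambient dimension $\T$, gives the remaining identity.

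There is no genuine obstacle here; the work is entirely bookkeeping, and the only thing worth being careful about is keeping straight which block lives in $\mathbb{K}^{\T}$ versus $\mathbb{K}^{\K}$ (the intersections of kernels sit in one factor, while the intersections of images sit in the other). Once the previous lemma is in hand, each identity is a one-line consequence.
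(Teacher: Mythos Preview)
Your proposal is correct and follows essentially the same approach as the paper: the paper also argues that the trivial-intersection identities \eqref{P1P3} and \eqref{P2P4} force the relevant restrictions to be injective (so the image has the same dimension as the kernel, computed by rank--nullity), and obtains the two intersection dimensions from \eqref{P1P2} and \eqref{P3P4} via the dimension formula for a sum of subspaces. The only cosmetic difference is that you name Grassmann's formula explicitly, whereas the paper refers to it loosely as ``the rank-nullity theorem.''
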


\begin{proof}
We prove equation~\eqref{P3kerP1}. Since, by \eqref{P1P3}, $\ker \p1 \cap \ker \p3 = \{0\}$, the dimension of the image of $\ker\p1$ under $\p3$ has the same dimension as $\ker\p1$, which is $n-p_4$.

Equation~\eqref{P1interP2} is a consequence of equation~\eqref{P1P2} and of the rank-nullity theorem.
\end{proof}

\subsection{Algorithms on linear subspaces in matrix form}

The algorithms we present in this article heavily rely on operations on subspaces of $\mathbb{K}^m$. To make the representation of these algorithms more practical for implementation, we introduce a matrix representation for subspaces and formulate the subspace operations needed in this paper on this representation.


We represent a linear subspace as an associated matrix\footnote{We allow the existence of matrices with $0$ column to represent the trivial subspace of $\mathbb{K}^m$.} whose columns form a basis of this subspace. In other words, if $\mathcal{A}$ is a subspace of $\mathbb{K}^m$ of dimension $n$, then we represent it using a $m\times n$ matrix $A$ such that $\im A=\mathcal{A}$. In this case, and only in this case, we will use the notation $\lgen A \rgen = \im A$ to emphasize that the columns of $A$ form a linear independent set.

With this notation we can formulate subspace computations as computations on their associated matrices as explained in the following. To formally emphasize this correspondence, these operations on matrices will carry the same symbol as the subspace computation (e.g., $\cap$) they represent augmented with an overline (e.g., $\olcap$). The operations are collected in Table~\ref{MatOps}. All algorithms in this paper are written as sequences of these matrix operations. Because of this, we implemented the algorithms by first designing an object oriented infrastructure that provides these operations. Then we could directly map the algorithms, as they are formulated, to code.

\begin{table}\small\center
\begin{tabular}{@{}lll@{}}
\toprule
Operation related to subspaces & Associated matrix operation & Correspondence\\
\midrule

Kernel of a matrix $M$ &
$\olker M$ &
$\lgen\olker M\rgen=\ker M$\\

Direct sum of subspaces $\mathcal{A},\mathcal{B}$ &
$\begin{pmatrix}A & B\end{pmatrix}$ &
$\begin{pmatrix}A & B\end{pmatrix} = \mathcal{A}\oplus\mathcal{B}$\\

Intersection of subspaces $\mathcal{A},\mathcal{B}$&
$A\olcap B$&
$\lgen A\olcap B\rgen = \mathcal{A}\cap\mathcal{B}$\\

Complement of a subspace $\mathcal{B}$ in $\mathcal{A}$&
$A\olominus B$ &
$\lgen A\olominus B\rgen\oplus  \mathcal{B}=\mathcal{A}$\\
\bottomrule
\end{tabular}
\caption{\label{MatOps} We summarize matrix operators to perform basic operations on subspaces. $M$ is a general matrix, while $A$ and $B$ are matrices with $m$ rows that represent the subspaces $\mathcal{A}$ and $\mathcal{B}$; i.e. $\mathcal{A} = \lgen A\rgen$ and $\mathcal{B} = \lgen B\rgen$. Inspection of these routines shows that they all can be implemented with $O(m^3)$ runtime.}
\end{table}

\mypar{Direct sum of two subspaces}
If $\lgen A\rgen, \lgen B\rgen\leq\mathbb{K}^m$ are two subspaces, then the direct sum can be computed by concatenating the two matrices: $\lgen A \rgen \oplus\lgen B\rgen=\lgen\begin{pmatrix}A & B\end{pmatrix}\rgen$. 

\mypar {Null space of a matrix}
Gaussian elimination can be used to compute the null space of a given $m\times n$ matrix $M$. Indeed, if the reduced column echelon form of the matrix $\begin{pmatrix}M\\I_n\end{pmatrix}$ is blocked into the form $\begin{pmatrix}M\n4 & \\ M\n2 & M\n1\end{pmatrix},$ where $M\n4$ has $m$ rows and no zero column, then $\ker M = \lgen M\n1\rgen$. We denote this computation with $\olker M = M_4$.

\mypar {Intersection of two subspaces}
For two subspaces $\im A, \im B\leq\mathbb{K}^m$, the intersection can be computed using the Zassenhaus algorithm. Namely, if the reduced column echelon form of $\begin{pmatrix}A & B \\ A & \end{pmatrix}$ is blocked into the form $\begin{pmatrix}C\n4 & &\\ C\n2 & C\n1& \end{pmatrix},$ where $C\n4$ and $C\n1$ have $m$ rows and no zero column, then $\im A\cap\im B = \lgen C\n1\rgen$. We denote this computation with $A\olcap B = C_4$.


\mypar {Complement of a subspace within another one}
Let $\im A\leq\im B\leq \mathbb{K}^m$. Then, a complement $\lgen C\rgen$ of $\im A$ in $\im B$, i.e., a space that satisfies $\lgen C\rgen \oplus \im A = \im B$, can be obtained as described in Algorithm~\ref{alg:sc}\footnote{This algorithm can be implemented to run in a cubic arithmetical time by keeping a reduced column echelon form of $\begin{pmatrix}A & C\end{pmatrix}$, which makes it possible to check the condition within the loop in a squared arithmetical time.}. We denote this operation with $C = B\olominus A$.

\begin{algorithm}\small
\caption{Complement of a subspace within another}
\label{alg:sc}
\begin{algorithmic}
	\REQUIRE Two matrices $A$ and $B$ with $m$ rows
	\ENSURE A matrix $A \olominus B$ such that $\lgen A \olominus B\rgen \oplus \im B=\im A$
	\STATE $C \leftarrow$ A $m\times 0$ matrix
	\FOR {\textbf{each} column vector $b$ of $B$}
	\IF {$\rank \begin{pmatrix}A & C & b\end{pmatrix}>\rank \begin{pmatrix}A & C\end{pmatrix}$}
	\STATE $C \leftarrow \begin{pmatrix}C & b \end{pmatrix}$
	\ENDIF
	\ENDFOR
	\RETURN $C$
\end{algorithmic}
\end{algorithm}

\subsection{Double complement}

\begin{lem}
\label{dcomplement}
Let $\mathcal{C}$ be a finite-dimensional vector space and let $\mathcal{A},\mathcal{B}\leq \mathcal{C}$ with $\dim \mathcal{A} \geq \dim \mathcal{B}$. Then, there exists a space $\mathcal{S}\leq \mathcal{C}$ such that:
$$\left\lbrace \begin{array}{l} 
\mathcal{S}\oplus \mathcal{A}=\mathcal{C}\\ 
\mathcal{S}\cap \mathcal{B}=\{0\}
\end{array}\right.
$$
\end{lem}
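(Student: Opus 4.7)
The plan is to construct $\mathcal{S}$ explicitly from a carefully chosen basis of $\mathcal{C}$ that is adapted to both $\mathcal{A}$ and $\mathcal{B}$. The hypothesis $\dim\mathcal{A}\geq\dim\mathcal{B}$ will enter exactly once, when we need to pair up certain basis vectors; this is where I expect the only real content of the argument to sit.

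Concretely, set $c = \dim(\mathcal{A}\cap\mathcal{B})$, $a = \dim\mathcal{A}$, $b = \dim\mathcal{B}$, $d = \dim\mathcal{C}$. Start from a basis $e_1,\dots,e_c$ of $\mathcal{A}\cap\mathcal{B}$, extend it by vectors $f_1,\dots,f_{a-c}$ to a basis of $\mathcal{A}$, then by vectors $g_1,\dots,g_{b-c}\in\mathcal{B}$ to a basis of $\mathcal{A}+\mathcal{B}$, and finally by vectors $h_1,\dots,h_{d-a-b+c}$ to a basis of $\mathcal{C}$. In this basis, $\mathcal{A}=\langle e_i,f_j\rangle$ and $\mathcal{B}=\langle e_i,g_k\rangle$.

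A first naive candidate for $\mathcal{S}$ is $\langle g_k,h_\ell\rangle$, which is a complement of $\mathcal{A}$ in $\mathcal{C}$ but meets $\mathcal{B}$ in $\langle g_k\rangle\neq\{0\}$. The key trick is to \emph{tilt} each $g_k$ by a vector outside $\mathcal{B}$: define
$$
\mathcal{S}\;=\;\bigl\langle\, g_1+f_1,\;g_2+f_2,\;\dots,\;g_{b-c}+f_{b-c},\;h_1,\dots,h_{d-a-b+c}\,\bigr\rangle.
$$
This definition is only legal because $b-c\leq a-c$, i.e.\ because $\dim\mathcal{B}\leq\dim\mathcal{A}$; this is the single point where the hypothesis is used.

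It then remains to verify three routine facts, each by reading off coefficients in the fixed basis $\{e_i,f_j,g_k,h_\ell\}$ of $\mathcal{C}$. First, $\dim\mathcal{S}=(b-c)+(d-a-b+c)=d-a$. Second, $\mathcal{S}\cap\mathcal{A}=\{0\}$: any nonzero element of $\mathcal{S}$ has a nontrivial $g_k$- or $h_\ell$-component and therefore cannot lie in $\mathcal{A}=\langle e_i,f_j\rangle$. Combined with the dimension count, this gives $\mathcal{S}\oplus\mathcal{A}=\mathcal{C}$. Third, $\mathcal{S}\cap\mathcal{B}=\{0\}$: an element $\sum\alpha_k(g_k+f_k)+\sum\beta_\ell h_\ell\in\mathcal{B}=\langle e_i,g_k\rangle$ forces, by linear independence, each $\alpha_k=0$, each $\beta_\ell=0$, which is the required conclusion.
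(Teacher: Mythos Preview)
Your proof is correct and follows essentially the same approach as the paper's. The paper also takes complements $\mathcal{P}$ of $\mathcal{A}\cap\mathcal{B}$ in $\mathcal{A}$ (your $\langle f_j\rangle$) and $\mathcal{Q}$ of $\mathcal{A}\cap\mathcal{B}$ in $\mathcal{B}$ (your $\langle g_k\rangle$), forms the tilted vectors $b_i+b'_i$ (your $g_k+f_k$), and then adjoins a complement of $\mathcal{A}+\mathcal{B}$ in $\mathcal{C}$ (your $\langle h_\ell\rangle$); the only cosmetic difference is that you fix a single adapted basis of $\mathcal{C}$ up front and handle the general case in one pass, whereas the paper treats the case $\mathcal{C}=\mathcal{A}+\mathcal{B}$ first and appends the extra complement afterwards.
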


\begin{proof}
We first consider the case where $\mathcal{C}=\mathcal{A}+\mathcal{B}$. We denote with $\mathcal P$ (resp. $\mathcal Q$) a complement of $\mathcal{A}\cap\mathcal{B}$ in $\mathcal A$ (resp. in $\mathcal B$). 
\begin{center}
\begin{tikzpicture}
    \node (top) at (0,0) {$\mathcal{C}=\mathcal{A}+\mathcal{B}$};
    \node (left) at (-1,-1)  {$\mathcal{A}$};
    \node (right) at (1,-2) {$\mathcal{B}$};
    \node (bottom) at (0,-3) {$\mathcal{A}\cap\mathcal{B}$};
    \node (left) at (-1,-1)  {$\mathcal{A}$};
    \draw [black, thick, shorten <=-2pt, shorten >=-2pt] (top) -- (left);
    \draw [black, thick, shorten <=-2pt, shorten >=-2pt] (top) -- (right);
    \draw [black, thick, shorten <=-2pt, shorten >=-2pt] (bottom) edge node[left] {$\mathcal{P}$} (left);
    \draw [black, thick, shorten <=-2pt, shorten >=-2pt] (bottom) edge node[right] {$\mathcal{Q}$} (right);
\end{tikzpicture}
\end{center}

We show first that $\mathcal{P}\cap\mathcal{Q}=\{0\}$. Let $v\in \mathcal{P}\cap\mathcal{Q}$. As $\mathcal{P}\leq\mathcal{A}$ and $\mathcal{Q}\leq\mathcal{B}$, we have $v\in \mathcal{A}\cap\mathcal{B}$. Therefore, $v\in \mathcal P\cap \mathcal A \cap \mathcal B=\{0\},$ as desired.

We now denote with $b=\{b_1,\dots, b_p\}$ (resp. $b'=\{b'_1,\dots, b'_q\}$) a basis of $\mathcal P$ (resp. $\mathcal Q$), implying $q\leq p$. Considering $w=\{b_1+b'_1,\dots , b_q+b'_q\}$, the following holds:
\item[i)] $w$ is linear independent: if $\{\alpha_1,\dots ,\alpha_q\}\in \mathbb{K}^q$ is such that $\sum{\alpha_i w_i}=0$, then $\sum{\alpha_i b_i}=\sum{\alpha_i b'_i}$. As $\sum{\alpha_i b_i}\in \mathcal{P}$ and $\sum{\alpha_i b'_i}\in \mathcal{Q}$, it comes that $\sum\alpha_i b'_i=\sum\alpha_i b_i=0$. It follows that for all $i$, $\alpha_i=0$, yielding the result.
\item[ii)] $\langle w\rangle \cap \mathcal{A} =\{0\}$: If $v\in \langle w\rangle \cap \mathcal{A}$, then there exists $\{\alpha_1,\dots ,\alpha_q\}\in \mathbb{K}^q$ such that $v=\sum{\alpha_i(b_i+b'_i)}\in \mathcal{A}$. It implies $\sum{\alpha_ib'_i}\in\mathcal{A}$. As the left hand side is in $\mathcal{Q}$, it comes that $\sum{\alpha_ib'_i}=0$. It follows that for all $i$, $\alpha_i=0$, yielding the result.
\item[iii)] $\langle w\rangle \cap \mathcal{B} =\{0\}$: Same proof as above.

Then, as $\dim \langle w\rangle=q=\dim \mathcal{C}-\dim \mathcal{A}$, $\mathcal S = \langle w\rangle$ satisfies the desired conditions.

In the general case, where $\mathcal C > \mathcal A + \mathcal B$, we use the same method, and simply add a complement $\mathcal S'$ of $\mathcal A + \mathcal B$ in $\mathcal C$ to the solution.

\end{proof}
Algorithm~\ref{alg:dc} uses the method used in this proof to compute a basis of $\mathcal{S}$, given $\mathcal{A}$, $\mathcal{B}$ and $\mathcal{C}$. Note that if $\dim \mathcal{A}=\dim \mathcal{B}$, then $\mathcal {S}$ is a complement of both $\mathcal{A}$ and $\mathcal{B}$ in $\mathcal{C}$.

\begin{algorithm}\small
\caption{``Double complement'' algorithm (Lemma~\ref{dcomplement})}
\label{alg:dc}
\begin{algorithmic}
	\REQUIRE $A$, $B$ and $C$, such that $\im A, \im B \leq\im C$ and $\rank A \geq \rank B$
	\ENSURE A matrix $S$ such that $\lgen S\rgen \oplus \im A=\im C$ and $\lgen S\rgen \cap \im B=\{0\}$.
	\STATE $P \leftarrow A \olominus (A\olcap B)$
	\STATE $Q \leftarrow B \olominus (A\olcap B)$
	\STATE $P' \leftarrow P$ truncated such that $P'$ and $Q$ have the same size
	\STATE $S'\leftarrow C \olominus \begin{pmatrix}A& B\end{pmatrix}$
	\RETURN $\begin{pmatrix}S' & P' + Q \end{pmatrix}$
\end{algorithmic}
\end{algorithm}

\section{Proof of Theorem~\ref{bounds}}\label{proofTh1}

We start with an auxiliary result that asserts that a decomposition of the form \eqref{deco3} is characterized by $L$.
\begin{lem}
\label{rewriting}
Decomposition \eqref{deco3} exists if and only if $L$ is chosen such that $\p1-L\p3$ is non-singular. In this case, 
\begin{equation}
\label{rkR}
\rank R = \rank (\p2-L\p4).
\end{equation}
\end{lem}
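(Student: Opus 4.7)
The plan is to multiply out the three factors on the right-hand side of \eqref{deco3} symbolically and then match blocks with $\p0$. A direct block computation gives
$$
\begin{pmatrix}I_\T & \\ L & I_\K\end{pmatrix}\!\begin{pmatrix}C\n4 & C\n3\\ & C\n1\end{pmatrix}\!\begin{pmatrix}I_\T & \\ R & I_\K\end{pmatrix}
=\begin{pmatrix} C\n4+C\n3 R & C\n3 \\ L(C\n4+C\n3 R)+C\n1 R & L C\n3+C\n1\end{pmatrix},
$$
so equating with $\p0$ yields $C\n3=\p3$, $C\n1=\p1-L\p3$, $C\n4=\p4-\p3 R$, and $C\n1 R=\p2-L\p4$. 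This identification is the workhorse of the whole proof; everything else follows from manipulating these relations.

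For the ``only if'' direction, suppose \eqref{deco3} holds. The two outer factors are unit block-triangular and hence non-singular, so the middle block-triangular matrix equals their inverses times $\p0$ and is therefore non-singular. A block-triangular matrix is non-singular iff both diagonal blocks are non-singular, so in particular $C\n1=\p1-L\p3$ is non-singular.

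For the ``if'' direction, assume $L$ is such that $\p1-L\p3$ is non-singular. Define $C\n1=\p1-L\p3$, $C\n3=\p3$, $R=C\n1^{-1}(\p2-L\p4)$, and $C\n4=\p4-\p3 R$. Substituting these back into the product above recovers $\p0$ blockwise, so the decomposition exists; the non-singularity of $C\n4$ is automatic, since the middle matrix must be non-singular (as $\p0$ and the outer factors are), forcing both diagonal blocks of the middle matrix to be non-singular.

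Finally, for \eqref{rkR}, the relation $R=(\p1-L\p3)^{-1}(\p2-L\p4)$ from the ``if'' direction (which in fact holds whenever a decomposition exists, by the same block identification) shows that $R$ differs from $\p2-L\p4$ by left-multiplication by a non-singular matrix, so $\rank R = \rank(\p2-L\p4)$. The main obstacle is conceptual rather than computational: one must recognize that in the ``if'' direction, the non-singularity of $C\n4$ need not be imposed as a separate hypothesis but comes for free from the non-singularity of $\p0$, so that a single scalar condition on $L$ (namely that $\p1-L\p3$ be invertible) fully characterizes the existence of the decomposition.
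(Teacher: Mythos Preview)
Your proof is correct and follows essentially the same approach as the paper: both reduce to the observation that, after fixing $L$, the remaining two factors constitute a decomposition of type~\eqref{deco2} for the matrix $\begin{pmatrix}\p4 & \p3\\ \p2-L\p4 & \p1-L\p3\end{pmatrix}$, which exists iff the lower-right block is non-singular and then forces $\rank R=\rank(\p2-L\p4)$. The only stylistic difference is that the paper invokes~\eqref{deco2} as a black box after left-multiplying $\p0$ by the inverse of the $L$-block, whereas you expand the full triple product and match blocks directly; the content is the same.
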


\begin{proof} We have:
\begin{equation}
\label{decor}
\begin{pmatrix}
I_\T &\\
L & I_\K
\end{pmatrix}^{-1}P=\begin{pmatrix}
\p4 & \p3\\
\p2-L\p4 & \p1-L\p3
\end{pmatrix}\end{equation}
This matrix can be uniquely decomposed as in \eqref{deco2} if and only if $\p1-L\p3$ is non-singular, and we have the desired value for $\rank R$.
\end{proof}

Now we start the actual proof of Theorem~\ref{bounds}. If we assume that decomposition \eqref{deco3} exists for $\p0$, then Lemma~\ref{rewriting} yields
\begin{align}
\label{expdeco}
P=\begin{pmatrix}
I_\T &\\
L & I_\K
\end{pmatrix}\cdot\begin{pmatrix}
\p4-\p3(\p1-L\p3)^{-1}(\p2-L\p4)&\p3\\
&\p1-L\p3
\end{pmatrix}\cdot\notag\\\begin{pmatrix}
I_\T &\\
(\p1-L\p3)^{-1}(\p2-L\p4) & I_\K
\end{pmatrix}
\end{align}

It follows:
\begin{itemize}
\item \eqref{bound0} is obvious from \eqref{expdeco}.
\item $\mathbb{K}^\K=\im (\p1 -L\p3) \leq \im \p1 + \im L$. Thus, $\K\leq \R1+\rank L$, which yields \eqref{bound1}.
\item $\im (\p2-L\p4)=(\p2-L\p4)(\mathbb{K}^{\K})\geq(\p2-L\p4)(\ker \p4) = \p2(\ker \p4)$. \eqref{bound2} now follows from \eqref{P2kerP4} and \eqref{rkR}.
\item \eqref{bound3} is a direct computation: 
$$\begin{array}{rcl}
\R2 & = & \rank(\p2-L\p4+L\p4)\\
&\leq & \rank (\p2-L\p4)+\rank (L\p4)\\
&\leq & \rank R+\rank L.\end{array}$$
\end{itemize}

\section{Proof of Theorem~\ref{master}, case $\R2\leq \T+\K-\R1 - \R4$}

\label{p2lower}
In this section, we provide an algorithm to construct an appropriate decomposition, in the case where $\R2\leq \T+\K-\R1 - \R4$ (Figure~\ref{boundGraph} left). This means that, using Lemma \ref{rewriting}, we have to build a matrix $L$ that satisfies
$$
\begin{cases}
\p1-L\p3 \text{ is non-singular},\\
\rank L = \K-\R1,\\
\rank (\p2-L\p4) = \T-\R4.
\end{cases}
$$

\subsection{Sufficient conditions}
We first derive a set of sufficient conditions that ensure that $L$ satisfies the two following properties: $\p1-L\p3$ non singular (Lemma~\ref{isSol}) and $\rank (\p2-L\p4)=\T-\R4$ (Lemma~\ref{P2SP4min1}).

\begin{lem}
\label{isSol}
If $\rank L = \K-\R1$ and $\im \p1\oplus L\p3(\ker \p1) =\mathbb{K}^\K$, then $\p1-L\p3$ is non-singular.
\end{lem}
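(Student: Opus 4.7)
The plan is to establish that $\p1 - L\p3$, a square $\K \times \K$ matrix, has trivial kernel. Suppose $(\p1 - L\p3)x = 0$, equivalently $\p1 x = L\p3 x$. The left-hand side lies in $\im \p1$ and the right-hand side in $\im(L\p3)$. If I can show $\im \p1 \cap \im(L\p3) = \{0\}$, then both sides vanish, giving $x \in \ker \p1$ and $L\p3 x = 0$; and if I can additionally show that $L\p3$ is injective on $\ker \p1$, this forces $x = 0$, completing the proof.

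The heart of the argument is a dimension count that upgrades the hypothesized splitting $\im \p1 \oplus L\p3(\ker \p1) = \mathbb{K}^\K$ into an identification $\im(L\p3) = L\p3(\ker \p1)$. The rank hypothesis $\rank L = \K - \R1$ gives $\rank(L\p3) \leq \K - \R1$. The direct-sum hypothesis, combined with $\dim \im \p1 = \R1$, forces $\dim L\p3(\ker \p1) = \K - \R1$. Since $L\p3(\ker \p1) \subseteq \im(L\p3)$ and the two subspaces have equal dimension, they coincide. Substituting this back into the direct-sum hypothesis yields $\im \p1 \cap \im(L\p3) = \{0\}$, exactly what was required. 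The same dimension count shows the restriction $L\p3 \colon \ker \p1 \to L\p3(\ker \p1)$ is a surjection between spaces of common dimension $\K - \R1 = \dim \ker \p1$, hence injective.

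With both facts in hand the argument closes mechanically: $\p1 x = L\p3 x$ forces both sides into $\im \p1 \cap \im(L\p3) = \{0\}$, so $x \in \ker \p1$ and $L\p3 x = 0$, and injectivity of $L\p3$ on $\ker \p1$ gives $x = 0$. The only delicate point is the dimension-matching step identifying $\im(L\p3)$ with $L\p3(\ker \p1)$: without the sharp equality $\rank L = \K - \R1$ the inclusion $L\p3(\ker \p1) \subseteq \im(L\p3)$ could be strict, and one would lose the crucial trivial-intersection property that drives the whole proof.
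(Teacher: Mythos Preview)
Your proof is correct and rests on the same key observation as the paper's: the dimension count forces $L\p3(\ker \p1)=\im(L\p3)=\im L$, so that the hypothesized direct sum $\im \p1 \oplus L\p3(\ker \p1)=\mathbb{K}^{\K}$ actually separates $\im \p1$ from the full image of $L\p3$. The only cosmetic difference is that the paper argues surjectivity (computing $\im(\p1-L\p3)$ via a complement $\mathcal{U}$ of $\ker \p1$) while you argue injectivity (showing $\ker(\p1-L\p3)=\{0\}$); for a square matrix these are equivalent.
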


\begin{proof}
We denote with $\mathcal{U}$ a complement of $\ker \p1$ in $\mathbb{K}^{\K}$, i.e., $\mathbb{K}^{\K}=\ker \p1\oplus\mathcal{U}$. This implies $\im \p1=\p1(\mathbb{K}^{\K})=\p1(\mathcal{U})$.
Now, let $\im \p1\oplus L\p3(\ker \p1)=\mathbb{K}^\K$. Hence, $\dim L\p3(\ker \p1)=\K-\R1=\rank L$ from which we get $\im L=L\p3(\ker \p1)$. In particular, $L\p3(\mathcal{U})\leq \im L = L\p3(\ker \p1)$. Further, 
$$\begin{array}{rcl}
\im (\p1-L\p3) & = & (\p1-L\p3)(\mathcal{U} \oplus \ker \p1)\\
& = & (\p1-L\p3)(\mathcal{U})+L\p3(\ker \p1)
\end{array}$$
As $L\p3(\mathcal{U})\leq L\p3(\ker \p1)$ and $\im \p1\cap L\p3(\ker \p1)=\{0\}$, we have:
$$\begin{array}{rcl}
\im (\p1-L\p3) & = & \p1(\mathcal{U})+L\p3(\ker \p1)\\
& = & \im \p1 +L\p3(\ker \p1)\\
& = & \mathbb{K}^\K\end{array}$$ as desired.
\end{proof}

\begin{lem}
\label{P2SP4min1}
If, for every vector $v$ of $\im \p4$, $L$ satisfies $Lv \in \p2\p4^{-1}(\{v\})$, then $\rank (\p2-L\p4)=\T-\R4$.
\end{lem}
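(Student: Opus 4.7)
The plan is to show the two inclusions $\p2(\ker\p4)\subseteq \im(\p2-L\p4)\subseteq \p2(\ker\p4)$; since $\dim\p2(\ker\p4)=\T-\R4$ by equation \eqref{P2kerP4}, this will yield the claim.

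For the easy inclusion, I would simply observe that on $\ker\p4$ the map $\p2-L\p4$ agrees with $\p2$, so $\p2(\ker\p4)=(\p2-L\p4)(\ker\p4)\subseteq \im(\p2-L\p4)$. This uses nothing from the hypothesis on $L$ and already gives $\rank(\p2-L\p4)\geq\T-\R4$.

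For the reverse inclusion, I would first rephrase the hypothesis: $Lv\in\p2\p4^{-1}(\{v\})$ for every $v\in\im\p4$ means that for each such $v$ one can pick a preimage $u'\in\p4^{-1}(\{v\})$ with $\p2u'=Lv$, i.e.\ with $(\p2-L\p4)u'=\p2u'-Lv=0$. Then, given an arbitrary $x\in\mathbb{K}^{\T}$, I would apply this to $v:=\p4x\in\im\p4$ to obtain such a $u'$, and set $w:=x-u'$. By construction $\p4w=\p4x-v=0$, so $w\in\ker\p4$ and $x=u'+w$. A direct computation then gives
$$
(\p2-L\p4)x=(\p2-L\p4)u'+(\p2-L\p4)w=0+\p2w\in\p2(\ker\p4),
$$
which establishes $\im(\p2-L\p4)\subseteq\p2(\ker\p4)$.

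The only mildly tricky step is the second one: one has to resist reading the hypothesis pointwise for a fixed preimage map and instead exploit the freedom to pick, for each $v\in\im\p4$, a preimage $u'$ on which $\p2-L\p4$ vanishes. Once this decomposition $\mathbb{K}^{\T}=\{\text{such }u'\text{'s}\}+\ker\p4$ is in hand, everything reduces to the elementary observation that $\p2-L\p4$ coincides with $\p2$ on $\ker\p4$, and the two inclusions combine to give $\rank(\p2-L\p4)=\T-\R4$.
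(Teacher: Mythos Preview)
Your proof is correct and follows essentially the same route as the paper's: both arguments show $\im(\p2-L\p4)=\p2(\ker\p4)$ by using the hypothesis at $v=\p4x$ together with the fact that $\p4^{-1}(\{\p4x\})=x+\ker\p4$. The paper compresses this into a chain of set memberships, while you unpack it by explicitly choosing a preimage $u'$ and writing $x=u'+w$ with $w\in\ker\p4$; the content is identical.
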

\begin{proof}
Let $Lv \in \p2\p4^{-1}(\{v\})$ for all $v\in \im \p4$. If $u\in \mathbb{K}^\T$, we have:
$$\begin{array}{rcl}
(\p2-L\p4)u & = & \p2u-L\p4u\\
& \in & \p2u-\p2\p4^{-1}(\{\p4u\})\\
& \in & \p2u-\p2(u+\ker \p4)\\
& \in & \p2(\ker \p4)
\end{array}$$
Therefore, $\im (\p2-L\p4) = \p2(\ker \p4)$. Thus, $\rank \p2-L\p4=\T-\p4$.
\end{proof}

The following lemma summarizes the two previous results:
\begin{lem}
\label{cond1}
Let $\mathcal{Y}$ be a complement of $\im \p1$ and $\mathcal{T}$ a complement of $\p4(\ker \p2)$ in $\im \p4$. If
$$
\begin{cases}
\im L = \mathcal{Y},\\
L\cdot \p3(\ker \p1)=\mathcal{Y},\\
L\cdot v\in \p2\p4^{-1}(\{v\}), \forall v \in \mathcal{T},\\
L\cdot \p4(\ker \p2) = \{0\},\\
\end{cases}
$$
then $L$ is an optimal solution\footnote{The proposed set of sufficient conditions is stronger than necessary; if we replace the last condition with $L\cdot \p4(\ker \p2)\leq \p2(\ker \p4)$, if and only if holds.}.
\end{lem}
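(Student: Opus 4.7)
The plan is to verify that $L$ satisfies the hypotheses of the two preceding sufficiency lemmas and then combine with Lemma~\ref{rewriting} to conclude optimality in the regime $\R2\leq \T+\K-\R1-\R4$.

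First, I would apply Lemma~\ref{isSol}. The first condition gives $\rank L = \dim \mathcal{Y} = \K-\R1$ directly, since $\mathcal{Y}$ is a complement of $\im\p1$ in $\mathbb{K}^\K$. The second condition gives $L\p3(\ker\p1)=\mathcal{Y}$, which, together with $\mathcal{Y}\oplus \im\p1=\mathbb{K}^\K$, yields $\im\p1\oplus L\p3(\ker\p1)=\mathbb{K}^\K$. Both hypotheses of Lemma~\ref{isSol} are met, hence $\p1-L\p3$ is non-singular.

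Next, I would verify the hypothesis of Lemma~\ref{P2SP4min1}: for every $v\in\im\p4$, $Lv\in\p2\p4^{-1}(\{v\})$. Since $\im\p4 = \mathcal{T}\oplus \p4(\ker\p2)$, any such $v$ decomposes uniquely as $v=t+w$ with $t\in\mathcal{T}$ and $w\in\p4(\ker\p2)$. The fourth condition gives $Lw=0$, so $Lv=Lt$. The third condition provides some $x_t\in\p4^{-1}(\{t\})$ with $Lt=\p2 x_t$. Writing $w=\p4 y$ with $y\in\ker\p2$, the element $x_t+y$ satisfies $\p4(x_t+y)=t+w=v$ and $\p2(x_t+y)=\p2 x_t=Lt=Lv$. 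Hence $Lv\in\p2\p4^{-1}(\{v\})$, as required, and Lemma~\ref{P2SP4min1} gives $\rank(\p2-L\p4)=\T-\R4$.

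Finally, combining with Lemma~\ref{rewriting}, decomposition \eqref{deco3} exists with this choice of $L$, and $\rank R = \rank(\p2-L\p4)=\T-\R4$, while $\rank L=\K-\R1$ and $\rank C\n3=\R3$ by \eqref{bound0}. In the regime $\R2\leq \T+\K-\R1-\R4$, we have $\rank L + \rank R = \K+\T-\R1-\R4 = \max(\R2,\K+\T-\R1-\R4)$, matching the lower bound \eqref{boundS}, so $L$ indeed produces an optimal decomposition.

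I expect the only subtle point to be the set-theoretic manipulation $Lv\in\p2\p4^{-1}(\{v\})$ in step two, since $\p4^{-1}(\{v\})$ is a coset rather than a single element; the key is using the $\ker\p2$ freedom provided by the fourth condition to absorb the $w$-component. Everything else is a direct substitution into the two sufficiency lemmas, so no genuine obstacle is expected.
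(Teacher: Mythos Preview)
Your proposal is correct and follows exactly the approach the paper intends: Lemma~\ref{cond1} is introduced as a summary of Lemmas~\ref{isSol} and~\ref{P2SP4min1}, and you have spelled out precisely how the four conditions feed into those two lemmas (with the coset argument for $\p2\p4^{-1}(\{v\})$ being the only nontrivial step, handled correctly). The paper itself gives no further proof beyond the phrase ``summarizes the two previous results,'' so your write-up is in fact more explicit than the original.
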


\subsection{Building $L$}

We now build a matrix $L$ that satisfies the previous set of sufficient conditions. For all $v$ in a complement $\mathcal{T}$ of $\p4(\ker \p2)$, $L$ has to satisfy $Lv \in \p2\p4^{-1}(\{v\})$. We first show that, given a suitable domain and image, it is possible to build a bijective linear mapping that satisfies this property. 

\begin{lem}
\label{build}
Let $\p4(\ker \p2)\oplus \mathcal{T}=\im \p4$ and $\p2(\ker \p4)\oplus \mathcal{V}=\im \p2$. If $$\mathcal{F}=\p4^{-1}(\mathcal{T})\cap\p2^{-1}(\mathcal{V}),$$ then the mapping $f:\mathcal{T}\rightarrow\mathcal{V}$ such that for all $v\in \mathcal{F}, f(\p4 v)= \p2 v$ is well defined and is an isomorphism.
\end{lem}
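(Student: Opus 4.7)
The plan is to show that the restrictions $\p4|_{\mathcal{F}}\colon \mathcal{F}\to\mathcal{T}$ and $\p2|_{\mathcal{F}}\colon \mathcal{F}\to\mathcal{V}$ are both linear isomorphisms. Once this is established, the map $f$ is simply the composition $\p2|_{\mathcal{F}}\circ(\p4|_{\mathcal{F}})^{-1}$, which is automatically well-defined and an isomorphism. I would first note, for bookkeeping, that using $\ker\p4\cap\ker\p2=\{0\}$ from \eqref{P2P4}, the map $\p4$ is injective on $\ker\p2$ and $\p2$ is injective on $\ker\p4$; combined with \eqref{P2kerP4} this yields $\dim\p4(\ker\p2)=\T-\R2$ and $\dim\p2(\ker\p4)=\T-\R4$, so both $\mathcal{T}$ and $\mathcal{V}$ have dimension $\R2+\R4-\T$.

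The first step is injectivity. Let $v\in\ker\p4\cap\mathcal{F}$. Then $\p2 v\in \p2(\ker\p4)$ by construction, and also $\p2v\in\mathcal{V}$ since $v\in\p2^{-1}(\mathcal{V})$. Because $\p2(\ker\p4)\oplus\mathcal{V}=\im\p2$, the intersection $\p2(\ker\p4)\cap\mathcal{V}$ is trivial, so $\p2v=0$. Combined with $\p4v=0$ and \eqref{P2P4}, this forces $v=0$, showing $\p4|_\mathcal{F}$ is injective. The symmetric argument, swapping the roles of $\p2$ and $\p4$, gives that $\p2|_\mathcal{F}$ is injective as well.

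The second and more delicate step is surjectivity of $\p4|_\mathcal{F}$ onto $\mathcal{T}$. Given $t\in\mathcal{T}\leq\im\p4$, pick any preimage $u\in\mathbb{K}^\T$ with $\p4u=t$. The issue is that $u$ might not sit in $\p2^{-1}(\mathcal{V})$. To fix this, decompose $\p2u\in\im\p2=\p2(\ker\p4)\oplus\mathcal{V}$ as $\p2u=\p2c+b$ with $c\in\ker\p4$ and $b\in\mathcal{V}$, and set $v=u-c$. Then $\p4v=\p4u=t\in\mathcal{T}$ and $\p2v=b\in\mathcal{V}$, so $v\in\mathcal{F}$ and $\p4v=t$. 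A fully symmetric argument, using the decomposition $\im\p4=\p4(\ker\p2)\oplus\mathcal{T}$ to correct a preimage under $\p2$ by an element of $\ker\p2$, shows that $\p2|_\mathcal{F}$ is surjective onto $\mathcal{V}$.

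With both restrictions established as bijective linear maps, the composition $f=\p2|_\mathcal{F}\circ(\p4|_\mathcal{F})^{-1}\colon\mathcal{T}\to\mathcal{V}$ satisfies $f(\p4v)=\p2v$ for all $v\in\mathcal{F}$ and is automatically an isomorphism. The only subtle point in the argument is the surjectivity step, where the "correction trick" $v=u-c$ uses the direct sum decompositions in an essential way; the rest is routine verification that the complements in the hypothesis do the work they are supposed to do.
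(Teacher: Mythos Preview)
Your proof is correct and follows essentially the same approach as the paper: both show that the restrictions $\p4|_{\mathcal F}$ and $\p2|_{\mathcal F}$ are bijections and then define $f$ as the composite $\p2|_{\mathcal F}\circ(\p4|_{\mathcal F})^{-1}$. Your ``correction trick'' $v=u-c$ is exactly the paper's selection of the unique representative $v_f$ in the coset $u+\ker\p4$ whose image under $\p2$ lands in $\mathcal V$, and your injectivity argument matches the paper's verbatim.
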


\begin{proof}
We prove the lemma by first considering two functions $f_1$ and $f_2$ that are $P_1$ and $P_3$ restricted to $\mathcal{F}$ as shown in the diagram. We show that both are isomorphisms. Then $f=f_2\circ f_1^{-1}$ is the desired function.

\begin{center}
\begin{tikzpicture}
		\node (T) at (0,0) {$\mathcal{T}$};
		\node (V) at (1.5,0) {$\mathcal{V}$};		
		\node (F) at (0.75,-1.25) {$\mathcal{F}$};		
		\draw [->] (T)  edge node[above] {$f$}(V);
				\draw [->] (F)  edge node[left] {$f_1:\ x\mapsto P_1x$}(T);
						\draw [->] (F)  edge node[right] {$f_2:\ x\mapsto P_3x$}(V);
\end{tikzpicture}
\end{center}

We begin with the surjectivity of $f_1$. Let $x\in \mathcal{T}$. As $\mathcal{T}\leq\im\p4$, there exists a vector $v$ such that $\p4 v=x$. The coset $v+\ker \p4$ is obviously a subset of $\p4^{-1}(\mathcal{T})$. Additionally, its image under $\p2$, the coset $\p2(v+\ker \p4)=\p2 v +\p2\ker\p4$ contains a unique representative $\p2 v_f$ in $\mathcal{V}$, since $\im \p2 = \p2(\ker \p4)\oplus \mathcal{V}$. Therefore, $v_f\in\p2^{-1}(\mathcal{V})$, and thus $v_f\in \mathcal{F}$ and $f_1(v_f)=x,$ as desired.

We now prove that $f_1$ is injective. Let $v\in\ker f_1\leq\mathcal{F}$. We have $\p2v\in\mathcal{V}$. Since $v\in\ker\p4$, $\p2v\in\p2\ker\p4$. Since $\p2(\ker \p4)\cap \mathcal{V}=\{0\}$, $\p2v = 0$ and thus $v\in\ker\p2$. Equation~\eqref{P2P4} shows that $v=0$, as desired. Thus, $f_1$ is bijective.

The proof that $f_2:\ \mathcal{F}\to\mathcal{V},\ v \mapsto \p2 v$ is bijective is analogous. It follows that $f=f_2\circ f_1^{-1}$ is the desired isomorphism.
\end{proof}

\label{Final1}

As explained below, we now build a matrix $L$ that matches the conditions listed in Lemma~\ref{cond1}. As they involve two spaces that may not be in a direct sum, $\p3(\ker \p1)$ and a complement of $\p4(\ker \p2)$ in $\im \p4$, some precautions must be taken.

We first construct the image $\mathcal{Y}$ of $L$. It must be a complement of $\im \p1$ and must contain a complement $\mathcal{Y}_1$ of $\p2(\ker \p4)$ in $\im \p2$. As $\R2\leq \T+\K-\R1-\R4$, $\T-\R4\geq \R1 +\R2 - \K$, and we have $\dim(\p2(\ker \p4)) \geq \dim (\im \p1 \cap \im \p2)$. Therefore, we can use the Lemma~\ref{dcomplement} to build a space $\mathcal{Y}_1$ such that:
$$\left \{\begin{array}{l}
\mathcal{Y}_1\oplus \p2(\ker \p4)=\im \p2,\\
\mathcal{Y}_1\cap \im \p1 \cap \im \p2=\{0\}.
\end{array}\right .$$ 
We then complete $\mathcal{Y}_1$ to form a complement $\mathcal{Y}$ of $\im \p1$.

We now decompose $\mathbb{K}^\T$ the following way:
\begin{center}
\begin{tikzpicture}
		\node (Z) at (0,0) {$\mathcal{X}_1$};
		\node [anchor=north west] (op1) at (Z.north east){$\oplus$};
		\node [anchor=north west](X) at (op1.north east){$\mathcal{X}_2$};
		\node [anchor=north west](op2) at (X.north east){$\oplus$};
		\node [anchor=north west](Y) at (op2.north east){$\mathcal{X}_3$};
		\node [anchor=north west](op3) at (Y.north east){$\oplus$};
		\node [anchor=north west](P4kerP2) at (op3.north east) {$\p4(\ker \p2)$};
		\node [anchor=north west](op4) at (P4kerP2.north east){$\oplus$};
		\node [anchor=north west](X4) at (op4.north east) {$\mathcal{X}_4$};
		\node [anchor=west](op5) at (X4.east) {$=$};
		\node [anchor=west]at (op5.east) {$\mathbb{K}^\T$};
		\draw [decorate,decoration={brace,mirror,amplitude=4.5}] (Z.south west) -- (X.south east) node [black,midway,yshift=-0.5cm] {$\p3(\ker \p1)$};
		\draw [decorate,decoration={brace,amplitude=4.5}] (X.north west) -- (P4kerP2.north east) node [black,midway,yshift=0.5cm] {$\im \p4$};
\end{tikzpicture}
\end{center}
We define $\mathcal{X}_2=\p3(\ker \p1)\cap \im \p4$. $\mathcal{X}_2\cap \p4(\ker \p2)=\{0\}$ according to equation~\eqref{P3kerP1P4kerP2}. Then, we define $\mathcal{X}_3$ as a complement of $\p4(\ker \p2)\oplus \mathcal{X}_2$ in $\im \p4$ and $\mathcal{X}_1$ as a complement of $\mathcal{X}_2$ in $\p3(\ker \p1)$. $\mathcal{X}_4$ is defined as a complement of $\mathcal{X}_1\oplus\mathcal{X}_2\oplus\mathcal{X}_3\oplus \p4(\ker \p2)$.

Finally, we build $L$ through the associated mapping, itself defined using a direct sum of linear mappings defined on the following subspaces of $\mathbb{K}^\T$:
\begin{itemize}
\item We use Lemma~\ref{build} to construct a bijective linear mapping $f$ from $\mathcal{T}=\mathcal{X}_2\oplus \mathcal{X}_3$ over $\mathcal{Y}_1$. By definition, for all $v\in \mathcal{T}$, $f$ verifies $f(v)\in \p2\p4^{-1}(\{v\})$. Furthermore, as $f$ is bijective, its restriction on $\mathcal{X}_2$ is itself bijective over $f(\mathcal{X}_2)$.
\item We complete this bijective linear mapping with $g$, a bijective linear mapping between $\mathcal{X}_1$ and a complement $\mathcal{Y}_2$ of $f(\mathcal{X}_2)$ in $\mathcal{Y}$. This way, the restriction of $f\oplus g$ on $\mathcal{X}_1\oplus\mathcal{X}_2=\p3(\ker \p1)$ is bijective over $\mathcal{Y}$.
\item We consider the mapping $h$ that maps $\p4(\ker \p2)\oplus \mathcal{X}_4$ to $\{0\}$.
\end{itemize}
\begin{center}
\begin{tikzpicture}
		\node (X1) at (0,0) {$\mathcal{X}_1$};
		\node (X2) at (1.6,0){$\mathcal{X}_2\oplus\mathcal{X}_3$};
		\node (P4kerP2) at (4,0) {$\p4(\ker \p2)\oplus\mathcal{X}_4$};
		\node [below = of X1](Y1) {$\mathcal{Y}_2$};
		\node [below = of X2](Y2) {$\mathcal{Y}_1$};
		\node [below = of P4kerP2](nullS) {$\{0\}$};
		\draw [->] (X1)  edge node[right] {$g$}(Y1);
		\draw [->] (X2)  edge node[right] {$f$}(Y2);
		\draw [->] (P4kerP2)  edge node[right] {$h$}(nullS);

\end{tikzpicture}
\end{center}

The matrix associated with the linear mapping $f\oplus g\oplus h$ satisfies all the conditions of Lemma~\ref{cond1}, and is therefore an optimal solution.

This method is summarized in Algorithm~\ref{alg:L1}, which allows us to construct a solution for Theorem~\ref{master}. Its key part is the construction of a basis of $\mathcal F$, which uses a generalized pseudo-inverse $\p4^\dagger$ (resp. $\p2^\dagger$) of $\p4$ (resp. $\p2$), i.e., a matrix verifying $\p4\p4^\dagger\p4=\p4$ (resp. $\p2\p2^\dagger\p2=\p2$).  This algorithm is a main contribution of this article.

\begin{algorithm}\small
\caption{Constructing $L$ (Theorem~\ref{master}), case $\R2\leq \T+\K-\R1 - \R4$}
\label{alg:L1}
\begin{algorithmic}
	\REQUIRE $\T$,$\K$ and $P\in GL_{\T+\K}(\mathbb{K})$ such that $\R2\leq \T+\K-\R1 - \R4$
	\ENSURE $L$
	\STATE $Y_1\leftarrow$ Alg.~\ref{alg:dc} with $A=\p2\cdot \olker \p4$, $B=\p1\olcap \p2$, $C = \p2$
	\STATE $Y\leftarrow \begin{pmatrix}Y_1 & I_\T\olominus \begin{pmatrix}Y_1 & \p1\end{pmatrix}\end{pmatrix}$
	\STATE $X_2\leftarrow (\p3\cdot \olker \p1)\olcap \p4$
	\STATE $X_3\leftarrow \p4\olominus \begin{pmatrix}(\p4\cdot\ker\p2)& X_2\end{pmatrix})$ 
	\STATE $X_1\leftarrow (\p3\cdot\olker\p1) \olominus X_2$
	\STATE $X_4\leftarrow I_\T\olominus\begin{pmatrix}\p4 & \p3\cdot\olker\p1\end{pmatrix}$
	\STATE $F\leftarrow \begin{pmatrix}\olker \p4 & \p4^\dagger\cdot \begin{pmatrix}X_2 & X_3\end{pmatrix}\end{pmatrix}\olcap \begin{pmatrix}\olker \p2 & \p2^\dagger\cdot Y_1\end{pmatrix}$
	\STATE $Y_2\leftarrow Y\olominus (\p2\cdot(\begin{pmatrix}\olker \p4 &\p4^\dagger\cdot X_2\end{pmatrix}\olcap F))$
	\STATE $L_R\leftarrow \begin{pmatrix}\p4\cdot F & X_1 & \p4\cdot \olker\p2 & X_4\end{pmatrix}$ 
	\STATE $L_L\leftarrow \begin{pmatrix}\p2\cdot F & Y_2 & Z\end{pmatrix}$, where $Z$ is a zero filled matrix such that $L_L$ has the same number of columns as $L_R$
	\RETURN $L_L\cdot L_R^{-1}$
\end{algorithmic}
\end{algorithm}

Inspection of this algorithm shows that its arithmetic cost is $O((\T+\K)^3)$.

\subsection{Example}
We illustrate our algorithm with a concrete example. Motivated by our main application (Section~\ref{sec:SLP}) we choose as base field $\mathbb{K}=\mathbb{F}_2$. For $\T=4$ and $\K=3$, we consider the matrix 
$$
P=\begin{pmatrix}\p4 & \p3\\\p2 & \p1\\\end{pmatrix}=\left(\begin{array}{cccc|ccc}
1 & 1 &   &   & 1 &   & 1\\
  &   & 1 &   &   & 1 &  \\
1 & 1 &   & 1 & 1 & 1 & 1\\
  &   &   &   &   & 1 & 1\\
\hline
1 &   & 1 & 1 &   &   &  \\
1 & 1 &   &   & 1 & 1 & 1\\
  & 1 &   &   &   &   &  \\
\end{array}\right ).
$$

We observe $\R2=3\leq\T+\K-\R1-\R4=4+3-1-3$. Therefore, we can use Algorithm~\ref{alg:L1} to compute a suitable $L$.

The first step is to compute $Y_1$. We have:$$\p2\cdot \olker \p4=
\begin{pmatrix}1\\0 \\1\\\end{pmatrix}
\text{ and }\p1\olcap\p2=
\begin{pmatrix}0 \\1\\0 \\\end{pmatrix}
.$$ Using Algorithm~\ref{alg:dc}, we get $$Y_1=
\begin{pmatrix}1&1\\1&0\\1&0\\\end{pmatrix}
.$$ Then, we complete it to form a complement of $\olim\p1$: $$Y=
\begin{pmatrix}1&0\\0&1\\0&1\\\end{pmatrix}.
$$
The next step computes the different domains:
$$X_2=
\begin{pmatrix}1\\1\\ 0\\ 0\\\end{pmatrix}
\text{, }X_3=
\begin{pmatrix}1 \\ 0\\0\\0 \\\end{pmatrix}
\text{, }X_1=
\begin{pmatrix} 0\\0 \\ 0\\1\\\end{pmatrix}
\text{ and }X_4=()
.$$
To compute $F$, we need pseudo-inverses of $\p4$ and $\p3$:
$$\p4^\dagger=\begin{pmatrix}0 &0&0&0\\1&0&0&0 \\0 &1&0&0\\1&0&1&0\\\end{pmatrix}\text{ and }\p2^\dagger=\begin{pmatrix}0 &1&1\\0&0&1 \\ 0&0&0\\1&1&1\\\end{pmatrix}.
$$
We then obtain: 
$$
F=\begin{pmatrix} 0&0\\1&0 \\ 0&1\\1&0\end{pmatrix}.
$$
Then, we compute $Y_2$:
$$
\mathcal{Y}_2=
\begin{pmatrix}1 \\ 0\\0\\\end{pmatrix}.
$$
Now we can compute $L$. With
$$
L_R=\begin{pmatrix}
1 &  0 &  0 & 0 \\
0 & 1  & 0  & 1 \\
0 & 0 & 0 & 1\\
0 & 0 & 1 & 0\\
\end{pmatrix},
L_L=\begin{pmatrix}
1 &  1 & 1  & 0 \\
1 &  0 & 0  &  0\\
1 & 0 & 0 & 0\\
\end{pmatrix}
$$
we get
$$
L=L_L\cdot L_R^{-1}=
\begin{pmatrix}
1 &  1 &  1 & 1 \\
1 & 0  &  0 & 0 \\
1 & 0 & 0 & 0\\
\end{pmatrix}.
$$

The final decomposition is now obtained using \eqref{expdeco}:
\begin{align*}P=&\left(\begin{array}{cccc|ccc}
1 &   &   &   &  &  & \\
  & 1 &   &   &  &  & \\
  &   & 1 &   &  &  & \\
  &   &   & 1 &  &  & \\
\hline
1 &1   & 1  & 1  & 1 &   &  \\
1 &   &   &   &   & 1 &  \\
1 &  &  &  &   &   & 1\\
\end{array}\right )\cdot\left(\begin{array}{cccc|ccc}
 & 1  &  &  & 1 &   & 1\\
  &   & 1 &   &   & 1 &  \\
 &  1 &  & 1  & 1 & 1 & 1\\
 1 &  &  &  &   & 1 & 1\\
\hline
 &  &  &  &  & 1  & 1\\
 &  &  &  &   & 1 &  \\
 &  &  &  &  1 &  & 1\\
\end{array}\right )\cdot\\&\left(\begin{array}{cccc|ccc}
1 &   &   &   &  &  & \\
  & 1 &   &   &  &  & \\
  &   & 1 &   &  &  & \\
  &   &   & 1 &  &  & \\
\hline
  &   &   &   & 1 &   &  \\
  &   &   &   &   & 1 &  \\
 1 &  &  &  &   &   & 1\\
\end{array}\right ).\end{align*}

This decomposition satisfies $\rank L=2$ and $\rank R=1$, thus matching the bounds of Theorem~\ref{bounds}.

If we consider the application presented in Section~\ref{sec:SLP}, this decomposition provides a way to implement in hardware the permutation associated with $P$ on $128$ elements, arriving in chunks of $8$ during $16$ cycles. 
This yields an implementation consisting of a permutation network of $4$ \ttt-switches, followed by a block of $8$ RAM banks, followed by another permutation network with $8$ \ttt-switches.

\section{Proof of Theorem~\ref{master}, case $\R2\geq \T+\K-\R1 - \R4$}
\label{p2bigger}
In this case, the third inequality in Theorem~\ref{bounds} is restrictive (Figure~\ref{boundGraph} right). Using again Lemma~\ref{rewriting}, we have to build a matrix $L$ satisfying:
$$\begin{cases}
\p1-L\p3 \text{ is non-singular}\\
\rank L = \K-\R1\\
\rank (\p2-L\p4) = \R1+\R2-\K
\end{cases}$$

As in the previous section, we will first provide a set of sufficient conditions for $L$ and then build it.
\subsection{Sufficient conditions}
\label{Scond2}
The set of conditions that we will derive in this subsection will be slightly more complex than in the previous section, as we cannot reach the intrinsic bound of $\rank (\p2-L\p4)$. Particularly, we cannot use Lemma~\ref{P2SP4min1} directly.
\begin{lem}
\label{cond2}
If $\mathcal{W}$ is such that $\mathcal{W}\oplus \im \p1=\mathbb{K}^\K$ and $\mathcal{T}$ is such that:
$$\left\lbrace \begin{array}{l} 
\mathcal{T}\cap \p4(\ker \p2)=\{0\}\\
\mathcal{T} \leq \im \p4\\
\dim \mathcal{T} =\K-\R1,\\
\end{array}\right.$$
then if $L$ satisfies
$$\left\lbrace \begin{array}{l} 
\im L = \mathcal{W}\\
L\cdot \p3(\ker \p1)=\mathcal{W}\\
L\cdot v\in \p2\p4^{-1}(\{v\}), \forall v \in \mathcal{T}\\
L\cdot \p4(\ker \p2) = \{0\}\\
\end{array}\right.$$
then $L$ is a  solution\footnote{If we replace the last condition with $L\cdot \p4(\ker \p2)\leq \p2(\ker \p4)$, this set of conditions is actually equivalent to having an optimal solution $L$ that satisfies $\rank L=\K-\R1$.} that verifies $\rank L= \K-\R1$ and $\rank (\p2-L\p4) = \R1+\R2-\K$.
\end{lem}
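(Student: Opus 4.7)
The plan is to break the statement into three verifications done in sequence: (i) $\rank L = \K-\R1$; (ii) $\p1-L\p3$ is non-singular, so that Lemma~\ref{rewriting} produces a decomposition \eqref{deco3} with $\rank R = \rank(\p2-L\p4)$; and (iii) $\rank(\p2-L\p4) = \R1+\R2-\K$.

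Parts (i) and (ii) come almost for free. Since $\mathcal{W}\oplus\im\p1 = \mathbb{K}^\K$, the hypothesis $\im L = \mathcal{W}$ immediately gives $\rank L = \dim \mathcal{W} = \K-\R1$. Combining this with $L\cdot\p3(\ker \p1) = \mathcal{W}$ yields $\im\p1 \oplus L\p3(\ker \p1) = \mathbb{K}^\K$, so both hypotheses of Lemma~\ref{isSol} are in place and $\p1-L\p3$ is non-singular; Lemma~\ref{rewriting} then delivers the decomposition together with the identity $\rank R = \rank(\p2-L\p4)$.

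For part (iii) I would sandwich the rank between two matching bounds. The lower bound $\rank(\p2-L\p4) \geq \R1+\R2-\K$ is immediate from bound~\eqref{bound3} of Theorem~\ref{bounds}, now applicable: $\rank R + \rank L \geq \R2$ combined with $\rank L = \K-\R1$ gives the claim. For the matching upper bound I would exhibit a kernel of $\p2-L\p4$ of dimension at least $(\T-\R2)+(\K-\R1)$, as the internal direct sum of two explicit subspaces. First, $\ker \p2 \leq \ker(\p2-L\p4)$: for $u \in \ker \p2$ we have $(\p2-L\p4)u = -L\p4u$ with $\p4 u \in \p4(\ker \p2)$, which the hypothesis $L\cdot \p4(\ker \p2) = \{0\}$ annihilates. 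Second, for each $v \in \mathcal{T}$ the hypothesis supplies some $u_v \in \p4^{-1}(\{v\})$ with $\p2 u_v = Lv$, and $u_v$ is unique because any two candidates differ by an element of $\ker \p4 \cap \ker \p2 = \{0\}$ by \eqref{P2P4}. Uniqueness makes $v \mapsto u_v$ linear, the identity $\p4 u_v = v$ makes it injective, and $(\p2-L\p4)u_v = \p2 u_v - L\p4 u_v = Lv-Lv = 0$ places its image $\mathcal{K}_\mathcal{T}$ inside $\ker(\p2-L\p4)$, with $\dim \mathcal{K}_\mathcal{T} = \dim \mathcal{T} = \K-\R1$. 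Disjointness of $\ker \p2$ and $\mathcal{K}_\mathcal{T}$ then follows: any $u \in \ker \p2 \cap \p4^{-1}(\mathcal{T})$ has $\p4 u \in \mathcal{T}\cap \p4(\ker \p2) = \{0\}$ by hypothesis, forcing $u \in \ker \p4 \cap \ker \p2 = \{0\}$.

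The only delicate step in this plan is recognising $\mathcal{K}_\mathcal{T}$ as the image of a canonical linear section of $\p4$ restricted to $\mathcal{T}$, provided by the hypothesis and made unambiguous by \eqref{P2P4}; the rest is short dimension counting and direct-sum bookkeeping exploiting the two defining hypotheses $\mathcal{T}\cap \p4(\ker \p2) = \{0\}$ and $L\cdot \p4(\ker \p2) = \{0\}$.
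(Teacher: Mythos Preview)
Your argument is correct, and parts (i) and (ii) match the paper exactly: invoke Lemma~\ref{isSol} to get $\rank L=\K-\R1$ and the non-singularity of $\p1-L\p3$, then Lemma~\ref{rewriting} for the decomposition.

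For part (iii), however, you take a genuinely different route from the paper. The paper extends $\mathcal{T}$ by a complement $\mathcal{T}'$ so that $\im\p4=\p4(\ker\p2)\oplus\mathcal{T}\oplus\mathcal{T}'$, constructs an auxiliary matrix $L'$ supported on $\mathcal{T}'$ so that $L-L'$ satisfies the hypothesis of Lemma~\ref{P2SP4min1}, and then uses rank subadditivity:
\[
\rank(\p2-L\p4)\le\rank(\p2-(L-L')\p4)+\rank(L'\p4)\le(\T-\R4)+\dim\mathcal{T}'=\R1+\R2-\K.
\]
You instead bound the rank from above by exhibiting a large kernel directly, showing $\ker\p2\oplus\mathcal{K}_{\mathcal{T}}\le\ker(\p2-L\p4)$ where $\mathcal{K}_{\mathcal{T}}$ is the image of the (well-defined, by \eqref{P2P4}) linear section $v\mapsto u_v$. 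Your approach is more self-contained: it avoids the auxiliary construction of $L'$ and does not need to invoke Lemma~\ref{P2SP4min1} at all. The paper's approach, on the other hand, is a clean reduction to the already-proved Lemma~\ref{P2SP4min1}, emphasising the structural relationship between the two cases. Both yield only the inequality $\rank(\p2-L\p4)\le\R1+\R2-\K$; you make the matching lower bound explicit via~\eqref{bound3}, while the paper leaves that step implicit.
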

\begin{proof}
Let $L$ be a matrix that satisfies the conditions above. Using Lemma~\ref{isSol} as before, we get that $\rank L=\K-\R1$ and $\p1-L\p3$ invertible.

Now, with the definition of $\mathcal{T}$, we can define a dimension $\R1+\R2+\R4-\T-\K$ space $\mathcal{T'}$ such that $\im \p4=\p4(\ker \p2)\oplus \mathcal{T} \oplus \mathcal{T'}$. Then, we define a matrix $L'$ such that:
$$\left\lbrace \begin{array}{l} 
L'\cdot v\in Lv-\p2\p4^{-1}(\{v\}) \text{, for all } v \in \mathcal{T'}\\
L'\cdot v=0 \text{, for all $v$ in $\p4(\ker \p2)\oplus \mathcal{T}$}\\
L'\cdot v=0 \text{, for all $v$ in a complement of $\im \p4$},\\
\end{array}\right.$$
$L'$ is therefore a rank $\R1+\R2+\R4-\T-\K$ matrix such that $\text{for all }v\in \im \p4, (L-L')v \in \p2\p4^{-1}(\{v\})$. We apply Lemma~$\ref{P2SP4min1}$ on $L-L'$ and get:
$$\begin{array}{rcl}
\rank(\p2-L\p4) & = & \rank (\p2-(L-L')\p4 - L'\p4)\\
& \leq & \rank (\p2-(L-L')\p4) + \rank(L'\p4)\\
& \leq & \dim \p2(\ker \p4) + \rank L'\\
& \leq & \R1+\R2-\K,\\
\end{array}$$
as desired.
\end{proof}

\subsection{Building $L$}\label{Final2}

We will build a matrix $L$ that matches the conditions listed in Lemma~\ref{cond2}. As before, we consider the image $\mathcal{Y}$ of $L$ first. We will design it such that it is a complement of $\im \p1$, and that is contained in a complement $\mathcal{Y'}$ of $\p2(\ker \p4)$ in $\im \p2$. Using $\R2\geq \T+\K-\R1-\R4$ and Lemma~\ref{dcomplement}, we can get a space $\mathcal{Y}$ such that:
$$\left \{\begin{array}{l}
\mathcal{Y}\oplus (\im \p1 \cap \im \p2)=\im \p2\\
\mathcal{Y}\cap \p2(\ker \p4)=\{0\}
\end{array}\right .$$ 
This space satisfies $\mathcal{Y}\oplus \im \p1=\im \p2 + \im \p1=\mathbb{K}^\K$, and can be completed to a complement $\mathcal{Y'}$ of $\p2(\ker \p4)$ in $\im \p2$. Note that we will use $\mathcal{Y'}$ only to define $f$; the image of $L$ will be $\mathcal{Y}$.

Now, as before, we build $L$ through the associated mapping, itself defined using a direct sum of linear mappings defined on the same subspaces of $\mathbb{K}^\T$ as in Section~\ref{Final1}.
\begin{itemize}
\item We use Lemma~\ref{build} to construct a first bijective linear mapping $f'$ between $\mathcal{T'}=\mathcal{X}_2\oplus \mathcal{X}_3$ and $\mathcal{Y'}$. As $f'$ is bijective, we can define $\mathcal{T}=f'^{-1}(\mathcal{Y})$ and $f=\restriction{f'}{\mathcal{T}}$. Thus, $\mathcal{T}$ satisfies the properties in Lemma~\ref{cond2} and $L$ the condition for all $v\in \mathcal{T}, Lv\in \p2\p4^{-1}(\{v\})$.
\item Then, we consider a complement $\mathcal {X}_1'$ of $\mathcal{T}\cap \mathcal{X}_2$ in $\p3(\ker \p1)$, a complement $\mathcal{Y}_2'$ of $f(\mathcal{T}\cap \mathcal{X}_2)$ in $\mathcal{Y}$ and a bijective linear mapping $g$ between $\mathcal {X}_1'$ and $\mathcal {Y}_2'$. This way, the restriction of $f\oplus g$ on $\p3(\ker \p1)$ is bijective over $\mathcal{Y}$.
\end{itemize}
The rest of the algorithm in similar to the previous case:
\begin{itemize}
\item We consider the mapping $h$ that maps $\p4(\ker \p2)$ to $\{0\}$.
\item To complete the definition of $L$, we take a mapping $h'$ between a complement $\mathcal{X}_4'$ of $\mathcal{X}_1'\oplus\mathcal{T}\oplus \p4(\ker \p2)$ and $\{0\}$.
\end{itemize}

\begin{center}
\begin{tikzpicture}
		\node (X1) at (0,0) {$\mathcal{X}_1'$};
		\node (X2) at (1.6,0){$\mathcal{T}=f'^{-1}(\mathcal{Y})$};
		\node (P4kerP2) at (4,0) {$\p4(\ker \p2)$};
		\node (X4) at (6,0){$\mathcal{X}_4'$};
		\node [below = of X1](Y1) {$\mathcal{Y}_2'$};
		\node [below = of X2](Y2) {$\mathcal{Y}$};
		\node [below = of P4kerP2](nullS) {$\{0\}$};
		\draw [->] (X1)  edge node[right] {$g$}(Y1);
		\draw [->] (X2)  edge node[right] {$f$}(Y2);
		\draw [->] (P4kerP2)  edge node[right] {$h$}(nullS);
		\draw [->] (X4)  edge node[right] {$h'$}(nullS);
\end{tikzpicture}
\end{center}

The matrix associated with the mapping $f\oplus g\oplus h\oplus h'$ satisfies all the conditions of Lemma~\ref{cond2}, and is therefore an optimal solution.

Algorithm~\ref{alg:L2} summarizes this method, and allows to construct a solution for Theorem~\ref{master}, in the case where $\R2> \T+\K-\R1 - \R4$. This algorithm is a main contribution of this article.

\begin{algorithm}
\caption{Constructing $L$ (Theorem~\ref{master}), case $\R2> \T+\K-\R1 - \R4$}\small
\label{alg:L2}
\begin{algorithmic}
	\REQUIRE $\T$,$\K$, $P\in GL_{\T+\K}(\mathbb{K})$ such that $\R2> \T+\K-\R1 - \R4$
	\ENSURE $L$
	\STATE $Y\leftarrow$ Alg.~\ref{alg:dc} with $A=\p1\olcap\p2$, $B=\p2\cdot\olker\p4$ and $C=\p2$
	\STATE $X_2\leftarrow (\p3\cdot\olker\p1)\olcap \p4$
	\STATE $X_3\leftarrow \p4\olominus \begin{pmatrix}\p4\cdot\olker\p2 & X_2\end{pmatrix}$
	\STATE $F\leftarrow\begin{pmatrix}\olker \p4 &\p4^\dagger\cdot\begin{pmatrix}X_2 & X_3\end{pmatrix}\end{pmatrix}\olcap \begin{pmatrix}\olker \p2 &\p2^\dagger\cdot Y\end{pmatrix}$
  \STATE $X_1'\leftarrow (\p3\cdot\ker\p1) \olominus ((\p4\cdot F)\olcap X_2)$
	\STATE $X_4\leftarrow I_\T\olominus\begin{pmatrix}X'_1&\p4\cdot F&\p4\cdot\olker\p2\end{pmatrix}$
	\STATE $Y_2'\leftarrow Y\olominus( \p2\cdot(F\olcap\begin{pmatrix}\p4^\dagger\cdot X_2 & \olker \p4\end{pmatrix}))$
	
  \STATE $L_R\leftarrow \begin{pmatrix}\p4\cdot F & X'_1 & \p4\cdot \olker\p2 & X_4\end{pmatrix}$ 
	\STATE $L_L\leftarrow \begin{pmatrix}\p2\cdot F & Y'_2 & Z\end{pmatrix}$, where $Z$ is a zero filled matrix such that $L_L$ has the same number of columns as $L_R$
	\RETURN $L_L\cdot L_R^{-1}$

\end{algorithmic}
\end{algorithm}

As in the previous case, this algorithm has an arithmetic cost cubic in $\T+\K$.

\subsection{Example}

\label{sec:ex2}
We now consider, for $\mathbb{K}=\mathbb{F}_2$, $\T=4$ and $\K=3$, the matrix $$P=\begin{pmatrix}\p4 & \p3\\\p2 & \p1\\\end{pmatrix}=\left(\begin{array}{cccc|ccc}
  & 1 & 1 & 1 & 1 &   &  \\
1 &   &   & 1 &   & 1 & 1\\
  & 1 & 1 & 1 &   & 1 & 1\\
1 & 1 &   & 1 &   & 1 & 1\\
\hline
1 &   &   & 1 &   & 1 &  \\
  &   &   & 1 &   & 1 &  \\
1 &   & 1 & 1 & 1 & 1 &  \\
\end{array}\right ).$$

We observe $\R2=3>\T+\K-\R1-\R4=4+3-2-3$. Therefore, we use Algorithm~\ref{alg:L2} to compute a suitable $L$.

The first step is to compute $Y$. We have:$$\p2\cdot \olker \p4=
\begin{pmatrix}0\\1 \\1\\\end{pmatrix}
\text{ and }\p1\olcap\p2=
\begin{pmatrix}0 \\1\\0 \\\end{pmatrix}
.$$ Using Algorithm~\ref{alg:dc}, we get $$Y_1=
\begin{pmatrix}1&1\\1&0\\1&0\\\end{pmatrix}
.$$ Then, we complete it to form a complement of $\im\p1$: $$Y=
\begin{pmatrix}1&0\\1&0\\0&1\\\end{pmatrix}.$$

The next step computes the different domains:
$$
X_2=()
\text{ and }X_3=
\begin{pmatrix}1&0 \\ 0&1\\1&0\\0&0 \\\end{pmatrix}.
$$

To compute $F$, we need pseudo-inverses of $\p4$ and $\p3$:
$$\p4^\dagger=\begin{pmatrix}0&0&0&0\\0&1&0&1 \\0 &0&1&1\\0&1&0&0\\\end{pmatrix}\text{ and }\p2^\dagger=\begin{pmatrix}1 &1&0\\0&0&0 \\ 1&0&1\\0&1&0\\\end{pmatrix}$$
and get 
$$F=\begin{pmatrix} 1\\1 \\ 0\\0\end{pmatrix}.$$

Next we compute the remaining subspaces that depend on $F$: 
$$
X_1'=
\begin{pmatrix}0 \\ 1\\1\\1\end{pmatrix}\text{, } X_4=
\begin{pmatrix}1 \\ 0\\0\\0\end{pmatrix}\text{, and } Y_2'=
\begin{pmatrix}1 \\ 0\\1\end{pmatrix}
.$$

Noe we can compute $L$. With
$$L_R=\begin{pmatrix}
1 &  0 &  1 & 1 \\
1 & 1  & 0  & 0 \\
1 & 1 & 1 & 0\\
0 & 1 & 1 & 0\\
\end{pmatrix},
L_L=\begin{pmatrix}
1 &  1 & 0  & 0 \\
0 &  0 & 0  &  0\\
1 & 1 & 0 & 0\\
\end{pmatrix}
$$
we get
$$
L=L_L\cdot L_R^{-1}=
\begin{pmatrix}
0 &  1 &  0 & 0 \\
0 & 0  &  0 & 0 \\
0 & 1 & 0 & 0\\
\end{pmatrix}
.$$

The final decomposition is obtained using \eqref{expdeco}:
\begin{align*}P=&\left(\begin{array}{cccc|ccc}
1 &   &   &   &  &  & \\
  & 1 &   &   &  &  & \\
  &   & 1 &   &  &  & \\
  &   &   & 1 &  &  & \\
\hline
 &1   &   &   &  &   &  \\
 &   &   &   &   &  &  \\
 & 1 &  &  &   &   & \\
\end{array}\right )\cdot\left(\begin{array}{cccc|ccc}
 & 1  &  & 1     & 1 &   & \\
 1 &   &  &      &   & 1 & 1 \\
 &  1 & 1 &      &  & 1 & 1\\
 1 & 1 &  &      &   & 1 & 1\\
\hline
 &  &  &  &  &   & 1\\
 &  &  &  &   & 1 &  \\
 &  &  &  &  1 &  & 1\\
\end{array}\right )\cdot\\&\left(\begin{array}{cccc|ccc}
1 &   &   &   &  &  & \\
  & 1 &   &   &  &  & \\
  &   & 1 &   &  &  & \\
  &   &   & 1 &  &  & \\
\hline
  &   & 1  &   & 1 &   &  \\
  &   &   & 1  &   & 1 &  \\
  &  &  &  &   &   & 1\\
\end{array}\right ).\end{align*}

This decomposition satisfies $\rank L=1$ and $\rank R=2$, thus matching the bounds of Theorem~\ref{bounds}.

As before, if we consider the application of Section~\ref{sec:SLP}. 
The decomposition shows that we can implement in hardware the permutation associated with $P$ on $128$ elements, arriving in chunks of $8$ during $16$ cycles through a permutation network of $8$ \ttt-switches, followed by a block of $8$ RAM banks, followed by another permutation network with $4$ \ttt-switches.


\section{Rank exchange}\label{rkexc}

The solution built in Section~\ref{Final2} satisfies $\rank L=\K-\R1$ and $\rank \p2-L\p4=\R1+\R2-\K$. In this section, we will show that it is possible to construct a solution for all possible pairs $(\rank L, \rank \p2-L\p4)$ matching the bounds in Theorem~\ref{bounds}. First, we will construct a rank $1$ matrix $L'$ that will trade a rank of $L$ for a rank of $\p2-L\p4$ (i.e., $\rank (L+L')= 1+\rank L$, $\rank (\p2-(L+L')\p4)=\rank(\p2-L\p4)-1$ and $\p1-(L+L')\p3$ is non-singular) (see Figure~\ref{fig:rkexc}). This method can then be applied several times, until $\rank (\p2-L\p4)$ reaches its own bound, $\T-\R4$.

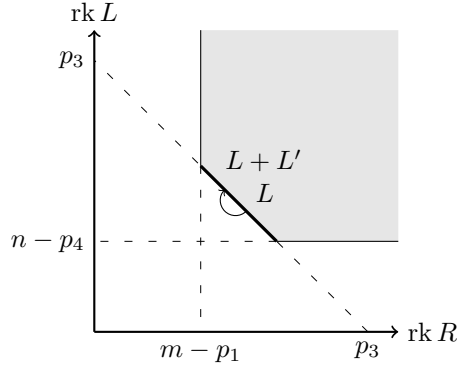
\begin{figure}
\centering
\begin{tikzpicture}[scale=0.4]
\draw[->,thick](0,0) -- (10,0);
\draw[->,thick](0,0) -- (0,10);
\draw (10,0) node [right] {$\rank R$};
\draw (0,10) node [above] {$\rank L$};
\path [fill=gray!20] (3.5,5.5)--(6,3)--(10,3)--(10,10)--(3.5,10)--(3.5,5.5);
\draw (6,3) -- (10,3);
\draw (3.5,5.5) -- (3.5,10);
\draw[loosely dashed,very thin] (3.5,5.5)--(3.5,0);
\draw[loosely dashed,very thin] (6,3)--(0,3);
\draw (3.5,0) node [below] {$\T-\R4$};
\draw (0,3) node [left]  {$\K-\R1$};
\draw[loosely dashed,very thin] (9,0)--(0,9);
\draw (9,0) node [below] {$\R2$};
\draw (0,9) node [left]  {$\R2$};
\draw[very thick] (3.5,5.5)--(6,3);
\draw (5,4) node [above right] {$L$};
\draw (4,5) node [above right] {$L+L'$};
\draw[->] (5,4) arc (-45:-225:0.5);
\end{tikzpicture}
\caption{$L'$ trades a rank of $L$ for a rank of $R$ on the associated decomposition.}
\label{fig:rkexc}
\end{figure}

We assume that $L$ satisfies the following conditions:
$$\left \{\begin{array}{l}
\p1-L\p3\text{ is non-singular}\\
\rank L+\rank(\p2-L\p4)=\R2\\
\rank(\p2-L\p4)>\T-\R4
\end{array}\right .$$

As in the previous sections, we first formulate sufficient conditions on $L'$, before building it.
\subsection{Sufficient conditions}
We now define $C=\p4-\p3(\p1-L\p3)^{-1}(\p2-L\p4)$. 
\begin{lem}
\label{rkexcond}
If $z\in\mathbb{K}^{\T}$ satisfies $z\notin\ker (\p2-L\p4)+\ker \p4$ and $L'$ satisfies:
$$\left \{\begin{array}{l}
\rank L'=1\\
L'\p4\ker(\p2-L\p4)=\{0\}\\
L'\p4z=(\p2-L\p4)z\\
L'Cz\neq0
\end{array}\right .$$ 
Then $L+L'$ is an optimal solution to our problem that satisfies $\rank (\p2-(L+L')\p4)=\rank (\p2-L\p4)-1$.
\end{lem}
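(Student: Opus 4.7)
The plan is to write $L' = wq^T$ as a rank-one outer product and verify three things: (a)~$\p1 - (L+L')\p3$ is non-singular; (b)~$\rank(\p2 - (L+L')\p4) = \rank(\p2-L\p4) - 1$; and (c)~$\rank(L+L') = \rank L + 1$. Together with Lemma~\ref{rewriting} and bound~\eqref{bound3}, these imply that $L+L'$ produces a valid decomposition achieving $\rank(L+L') + \rank R = \R2$, which is optimal.

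The first move is to extract structural information from the hypotheses. Since $z\notin \ker(\p2-L\p4)+\ker \p4$, both $(\p2-L\p4)z$ and $\p4 z$ are nonzero. Setting $\alpha = q^T \p4 z$, condition~3 rewrites as $\alpha w = (\p2-L\p4)z$, forcing $\alpha\neq 0$ and revealing that $w$ is a nonzero scalar multiple of $(\p2-L\p4)z$. In particular, $\im L' \le \im(\p2-L\p4)$, which will be used repeatedly.

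Next I would handle the rank drop of $\p2-(L+L')\p4$. Condition~2 gives $\ker(\p2-L\p4)\subseteq \ker(\p2-(L+L')\p4)$, and the computation $(\p2-(L+L')\p4)z = (\p2-L\p4)z - L'\p4 z = 0$ shows $z$ lies in the larger kernel but not in the smaller (since $z\notin\ker(\p2-L\p4)$). For the reverse direction, any $x$ in the enlarged kernel satisfies $(\p2-L\p4)x = wq^T\p4 x = \beta w$ with $\beta = q^T\p4 x$; using $\alpha w = (\p2-L\p4)z$, this gives $(\p2-L\p4)(x-(\beta/\alpha)z)=0$, so $x\in\ker(\p2-L\p4)+\langle z\rangle$. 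Thus $\dim\ker$ grows by exactly one.

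The main obstacle is (a): showing that condition~4 is exactly what is needed to keep $\p1-(L+L')\p3$ invertible. I would apply the matrix determinant lemma to the rank-one perturbation $(\p1-L\p3) - w\psi^T$ with $\psi = \p3^T q$: it is invertible iff $\psi^T(\p1-L\p3)^{-1} w \neq 1$. A direct computation using $(\p2-L\p4)z = \alpha w$ yields
$$
q^T C z = q^T\p4 z - \psi^T(\p1-L\p3)^{-1}(\p2-L\p4)z = \alpha\bigl(1 - \psi^T(\p1-L\p3)^{-1} w\bigr),
$$
so $L'Cz = (q^T Cz)\, w \neq 0$ is equivalent to non-singularity, as desired. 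Finally, optimality follows from $\rank(L+L') \le \rank L + 1$ combined with bound~\eqref{bound3} applied to $L+L'$, which forces $\rank(L+L') \ge \R2 - (\rank(\p2-L\p4)-1) = \rank L + 1$; equality then gives the claimed rank, and $\rank(L+L') \ge \rank L \ge \K-\R1$ preserves bound~\eqref{bound1}.
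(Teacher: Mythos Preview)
Your proof is correct and follows essentially the same strategy as the paper's: both reduce non-singularity of $\p1-(L+L')\p3$ to the scalar condition $L'Cz\neq 0$ via the rank-one structure of $L'$, and both show $z$ enlarges the kernel of $\p2-(L+L')\p4$. The differences are cosmetic: you invoke the matrix determinant lemma where the paper factors $(I - L'\p3(\p1-L\p3)^{-1})(\p1-L\p3)$ and argues directly on the kernel, and you prove the kernel equality $\ker(\p2-(L+L')\p4)=\ker(\p2-L\p4)\oplus\langle z\rangle$ explicitly whereas the paper only shows the inequality $\dim\ker$ grows by at least one and leaves the matching upper bound (and hence $\rank(L+L')=\rank L+1$) implicit via bound~\eqref{bound3}.
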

\begin{proof} We first prove that $\p1-(L+L')\p3=(I-L'\p3(\p1-L\p3)^{-1})(\p1-L\p3)$ is non singular. Let $x\in \ker(I-L'\p3(\p1-L\p3)^{-1})$. $x$ satisfies:
$$x-L'\p3(\p1-L\p3)^{-1}x=0.$$
Therefore, $x\in \im L'$. As $\rank L'=1$, $\exists \lambda \in \mathbb{K}$, $x=\lambda(\p2-L\p4)z=\lambda L'\p4z$. It comes that $$\lambda L'\p4z-\lambda L'\p3(\p1-L\p3)^{-1}(\p2-L\p4)z=0.$$ Finally, $\lambda L' C z=0$, which implies, as $L' C z\neq0$, $\lambda=0$, as desired.
\item We now prove that $\rank (\p2-(L+L')\p4)=\rank (\p2-L\p4)-1$. We have already $\ker (\p2-(L+L')\p4)\leq \ker (\p2-L\p4)$ as $L'\p4\ker(\p2-L\p4)=\{0\}$. We also have $(\p2-(L'+L)\p4)z=0$. As $z\notin\ker (\p2-L\p4)+\ker \p4$, $\ker (\p2-(L+L')\p4)\geq \ker (\p2-L\p4)\oplus\langle z\rangle$. Therefore, $$\dim \ker (\p2-(L+L')\p4)\geq 1+\dim \ker (\p2-L\p4),$$ as desired.
\end{proof}

\subsection{Building $L'$}
\begin{lem}
$\ker (\p2-L\p4)\cap \ker \p4=\{0\}$
\end{lem}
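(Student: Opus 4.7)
My plan is to prove this directly by taking an arbitrary vector $x$ in the intersection and showing it must be zero, using the already-established fact \eqref{P2P4}, namely $\ker \p2 \cap \ker \p4 = \{0\}$.

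Concretely, suppose $x \in \ker(\p2 - L\p4) \cap \ker \p4$. From $x \in \ker \p4$ we get $\p4 x = 0$, so $L\p4 x = 0$. Substituting into $(\p2 - L\p4)x = 0$ yields $\p2 x = 0$, i.e., $x \in \ker \p2$. Hence $x \in \ker \p2 \cap \ker \p4 = \{0\}$ by \eqref{P2P4}, giving $x = 0$.

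There is essentially no obstacle here: the lemma is a two-line consequence of the invertibility of $\p0$ (through \eqref{P2P4}) together with the trivial algebraic fact that $P_4 x = 0$ lets one cancel $L P_4 x$ in $(P_2 - LP_4)x$. The role of this lemma in what follows is presumably to ensure the dimension count $\dim \bigl(\ker(\p2-L\p4) + \ker \p4\bigr) = \dim \ker(\p2-L\p4) + \dim \ker \p4$, which guarantees that the subspace $\ker(\p2-L\p4) + \ker \p4$ has strictly smaller dimension than $\mathbb{K}^\T$ whenever $\rank(\p2 - L\p4) > \T - \R4$, so that a vector $z \notin \ker(\p2-L\p4) + \ker \p4$ needed to instantiate Lemma~\ref{rkexcond} actually exists.
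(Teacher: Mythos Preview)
Your proof is correct and essentially equivalent to the paper's. The paper phrases it as ``the block column $\begin{pmatrix}\p4\\\p2-L\p4\end{pmatrix}$ has full rank'' via \eqref{decor}, whereas you cancel $L\p4x$ on $\ker\p4$ to reduce to \eqref{P2P4}; both are the same one-line consequence of the invertibility of $\p0$.
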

\begin{proof}
This is a consequence of \eqref{decor}: the block column $\begin{pmatrix}\p4 \\ \p2-L\p4\end{pmatrix}$ has full rank.
\end{proof}

Thus, we have $\dim (\ker (\p2-L\p4)\oplus \ker \p4)=2\T-\R4-\rank(\p2-L\p4)<\T$. Decomposition~\eqref{expdeco} shows that $C$ is non-singular, and we have $\dim C^{-1}\p4\ker (\p2-L\p4)=\dim \p4\ker (\p2-L\p4)=\dim \ker (\p2-L\p4)=\T-\rank (\p2-L\p4)$. Using Lemma~\ref{dcomplement}, we can build a space $\mathcal{Z}$ such that:
$$\left \{\begin{array}{l}
 \mathcal{Z} \oplus \ker (\p2-L\p4)\oplus \ker \p4 =\mathbb{K}^{\T}\\
\mathcal{Z}\cap C^{-1}\p4\ker (\p2-L\p4) = \{0\}
\end{array}\right .$$ 

We can now pick a nonzero element $z\in\mathcal{Z}$ and build a corresponding $L'$:
\begin{itemize}
\item If $Cz\in\p4\ker (\p2-L\p4)\oplus \langle \p4z \rangle$: We take a complement $\mathcal{A}$ of $\p4\ker (\p2-L\p4)\oplus \langle \p4z \rangle$ and build $L'$ such that:
$$\left \{\begin{array}{l}
L'\p4z=(\p2-L\p4)z\\
L'(\p4\ker(\p2-L\p4)\oplus \mathcal{A})=\{0\}\\
\end{array}\right .$$ 
We have $L'Cz\neq0$. In fact, $Cz$ can be uniquely decomposed in the form $k+\lambda P_4z$, where $k\in \p4\ker (\p2-L\p4)$ and $\lambda\in\mathbb{K}$. As $z\notin C^{-1}\p4\ker (\p2-L\p4)$, $\lambda \neq 0$. Then, $L'Cz=L'k+L'\lambda P_4z=0 + \lambda (\p2-L\p4)z\neq 0$.

\item If $Cz\notin\p4\ker (\p2-L\p4)\oplus \langle \p4z \rangle$: The vector $a=Cz-\p4z$ is outside of $\p4\ker (\p2-L\p4)\oplus \langle \p4z \rangle$. Therefore, it is possible to build a complement $\mathcal{A}$ of $\p4\ker (\p2-L\p4)\oplus \langle \p4z \rangle$ that contains $a$. Then, we build $L'$ as before:
$$\left \{\begin{array}{l}
L'\p4z=(\p2-L\p4)z\\
L'(\p4\ker(\p2-L\p4)\oplus \mathcal{A})=\{0\}\\
\end{array}\right .$$ 
As in the previous case, we have $L'Cz=L'a+L'\p4z=0+(\p2-L\p4)z\neq0$.
\end{itemize}
In both cases, the matrix $L'$ we built satisfies the conditions of Lemma~\ref{rkexcond}. Therefore, $L+L'$ is the desired solution.

Algorithm~\ref{alg:Lp} summarizes this method, and allows to build a new optimal solution from a pre-existing one, with a different trade-off. This algorithm is a main contribution of this article.

\begin{algorithm}\small
\caption{Exchanging ranks between $L$ and $R$ (Theorem~\ref{master2})}
\label{alg:Lp}
\begin{algorithmic}
	\REQUIRE $P$ and a solution $L$ such that $\rank(\p2-L\p4)>\T-\R4$
	\ENSURE A new optimal solution $L$ with a rank incremented by $1$
	\STATE $K\leftarrow\olker(\p2-L\p4)$
	\STATE $C\leftarrow \p4-\p3(\p1-L\p3)^{-1}(\p2-L\p4)$
	\STATE $Z\leftarrow$ Alg. \ref{alg:dc} with $A= \begin{pmatrix}K &\olker\p4\end{pmatrix}, B=C^{-1}\p4 K$ and $C=I_m$
	\STATE $z\leftarrow$ first column of $Z$
	\IF {$Cz\in \begin{pmatrix}\p4\cdot K& \p4 z\end{pmatrix}$}
	\STATE $A\leftarrow I_m\olominus \p4\cdot\begin{pmatrix} K & z\end{pmatrix}$
	\ELSE
	\STATE $a\leftarrow (C-\p4)z$
	\STATE $A\leftarrow \begin{pmatrix} I_m\olominus\begin{pmatrix}\p4 K & \p4 z & a\end{pmatrix} & a\end{pmatrix}$
	\ENDIF
	\STATE $L'_R\leftarrow \begin{pmatrix}\p4z& \p4K& A\end{pmatrix}$
		\STATE $L'_L\leftarrow \begin{pmatrix}(\p2-L\p4)z & F\end{pmatrix}$, where $F$ is a zero filled matrix such that $L'_L$ has the same number of columns as $L'_R$
	\RETURN $L+L'_L\cdot L_R'^{-1}$
\end{algorithmic}
\end{algorithm}

\subsection{Example}
To illustrate Algorithm~\ref{alg:Lp}, we continue the example of Section~\ref{sec:ex2}, and the matrix $L$ that we found. We have:$$K=
\begin{pmatrix}1&0\\0&1 \\0&0 \\0&0\\\end{pmatrix}
\text{ and } C=
\begin{pmatrix}
  & 1 &   & 1\\
 1 &   &  &  \\
 & 1 &  1 &  \\
1  & 1 &  &  \\
\end{pmatrix}
.$$ Using Algorithm~\ref{alg:dc}, we get $$Z=z=
\begin{pmatrix} 0\\ 0\\ 1\\0\end{pmatrix}
.$$ 
As $Cz=
\begin{pmatrix}0\\0 \\1 \\0 \end{pmatrix}
$ is not included in $\begin{pmatrix}\p4K & \p4z\end{pmatrix}=
\begin{pmatrix}1&0&0\\0&1&0 \\1&0&0\\0&0&1 \\\end{pmatrix}
$, we compute $A$ as a complement of $\begin{pmatrix}\p4K & \p4z\end{pmatrix}$ that contains $a=
\begin{pmatrix} 1\\0\\0\\0\\\end{pmatrix}
$:$$\mathcal{A}=
\left\langle\begin{pmatrix}1 \\0\\0\\0\\\end{pmatrix}\right\rangle
.$$

Now, we compute $L'$, using:$$L'_R=
\begin{pmatrix}1&0&1&1\\0&1 &0&0\\1&0&1&0 \\0&1&1&0\\\end{pmatrix}
\text{, } L'_L=
\begin{pmatrix}
 0 & 0 & 0  & 0\\
 0 & 0  & 0 & 0 \\
1 & 0 &  0 & 0 \\
\end{pmatrix}
\text{, and } L'=
\begin{pmatrix}
 0 & 0 & 0  & 0\\
 0 & 0  & 0 & 0 \\
0 & 1 &  1 & 1 \\
\end{pmatrix}
.$$

Finally, we get the new decomposition, using, as usual, Equation~\eqref{expdeco}:\begin{align*}P=&\left(\begin{array}{cccc|ccc}
1 &   &   &   &  &  & \\
  & 1 &   &   &  &  & \\
  &   & 1 &   &  &  & \\
  &   &   & 1 &  &  & \\
\hline
 & 1 &  &   & 1 &   &  \\
  &  &  &  &   & 1 &  \\
 &  & 1 &1   &   &   & 1\\
\end{array}\right )\cdot\left(\begin{array}{cccc|ccc}
  & 1 & 1  & 1 & 1 &   &  \\
1 &   &   &  &   & 1 & 1\\
  & 1 & 1 &  &   & 1 & 1\\
1 & 1 &   &  &   & 1 & 1\\
\hline
 &  &  &  &  &  & 1 \\
 &  &  &  &   &  1 & \\
 &  &  &  & 1  & 1 &  \\
\end{array}\right )\cdot\\&\left(\begin{array}{cccc|ccc}
1 &   &   &   &  &  & \\
  & 1 &   &   &  &  & \\
  &   & 1 &   &  &  & \\
  &   &   & 1 &  &  & \\
\hline
  &   &  &   & 1 &   &  \\
  &   &  & 1  &   & 1 &  \\
  &   &  &   &   &   & 1\\
\end{array}\right ).\end{align*}

As expected, the left off-diagonal rank has increased by one, while the right one has decreased by one. The two different decompositions that we now have for $P$ cover all the possible tradeoffs that minimize the off-diagonal ranks.


\section{Conclusion}

In this paper, we introduced a novel block matrix decomposition that generalizes the classical block-LU factorization. A \ttt-blocked invertible matrix is decomposed into a product of three matrices: lower block unitriangular, upper block triangular, and lower block unitriangular matrix, such that the sum of the off-diagonal ranks are minimal. We provided an algorithm that computes an optimal solution with an asympotic number of operations cubic in the matrix size. We note that we implemented the algorithm for finite fields, for rational numbers, for Gaussian rational numbers and for exact real arithmetic for validation. For a floating point implementation, numerical issues may arise.

The origin of the considered decomposition, as we explained, is in the design of optimal circuits for a certain class of streaming permutations that are very relevant in practice. However, we believe that because of its simple and natural structure, the matrix decomposition is also of pure mathematical interest. Specifically, it would be interesting to investigate if the proposed decomposition is a special case of a more general problem that involves, for example, finer block structures.

\bibliographystyle{plain}
\bibliography{bib}
\end{document}